
\documentclass[microtype]{gtpart}
\agtart

\usepackage{stmaryrd,ifmtarg,enumitem,tikz-cd,braket,mathtools}
\usepackage[hyperpageref]{backref}
\usepackage{cleveref}

\title{Magnitude homology of enriched categories and metric spaces}

\author{Tom Leinster}
\givenname{Tom}
\surname{Leinster}
\address{School of Mathematics, University of Edinburgh}
\email{Tom.Leinster@ed.ac.uk}
\urladdr{https://www.maths.ed.ac.uk/~tl}
\author{Michael Shulman}
\givenname{Michael}
\surname{Shulman}
\address{Department of Mathematics, University of San Diego}
\email{Shulman@sandiego.edu}
\urladdr{http://home.sandiego.edu/~shulman}

\keyword{magnitude}
\keyword{magnitude homology}
\keyword{Hochschild homology}
\keyword{Euler characteristic}
\keyword{enriched category}
\keyword{metric space}
\keyword{categorification}
\subject{primary}{msc2020}{18G90}
\subject{secondary}{msc2020}{16E40}
\subject{secondary}{msc2020}{51F99}
\subject{secondary}{msc2020}{55N31}

\arxivreference{1711.00802}

\definecolor{darkgreen}{rgb}{0,0.45,0} 
\renewcommand*{\backref}[1]{}
  \renewcommand*{\backrefalt}[4]{({%
      \ifcase #1 Not cited.%
            \or on p.~#2%
            \else on pp.~#2%
      \fi%
    })}
\newtheorem{thm}{Theorem}[section]
\crefname{thm}{Theorem}{Theorems}
\newtheorem{lem}[thm]{Lemma}
\crefname{lem}{Lemma}{Lemmas}
\newtheorem{cor}[thm]{Corollary}
\crefname{cor}{Corollary}{Corollaries}
\theoremstyle{definition}
\newtheorem{defn}[thm]{Definition}
\crefname{defn}{Definition}{Definitions}
\newtheorem{eg}[thm]{Example}
\crefname{eg}{Example}{Examples}
\newtheorem{egs}[thm]{Examples}
\crefname{egs}{Examples}{Examples}
\newtheorem{assume}[thm]{Assumption}
\crefname{assume}{Assumption}{Assumptions}
\theoremstyle{remark}
\newtheorem{rmk}[thm]{Remark}
\crefname{rmk}{Remark}{Remarks}

\newcommand{\V}{\mathbf{V}}

\newcommand{\ob}{\mathrm{ob}}
\newcommand{\op}{^{\mathrm{op}}}
\newcommand{\size}{\#}
\newcommand{\mg}[2][]{\mathsf{Mag}_{#1}(#2)}
\newcommand{\MC}{\mathrm{MC}}
\newcommand{\MCS}{\mathrm{MC}^\Sigma}
\newcommand{\MDS}{\mathrm{MD}^\Sigma}
\newcommand{\uMCS}{\smash{\widetilde{\mathrm{MC}}}^\Sigma}
\newcommand{\uMDS}{\smash{\widetilde{\mathrm{MD}}}^\Sigma}
\newcommand{\rnk}{\mathrm{rk}}
\let\k\Bbbk
\newcommand{\N}{\mathbb{N}}
\newcommand{\Ch}{\mathrm{Ch}}
\newcommand{\Ab}{\mathrm{Ab}}
\newcommand{\FinSet}{\mathrm{FinSet}}
\newcommand{\A}{\mathbf{A}}

\newcommand{\HH}{\textit{HH}} 

\newcommand{\ser}[1]{(\!(#1)\!)}
\newcommand{\pser}[1]{\llbracket #1\rrbracket}

\newcommand{\adj}{\operatorname{adj}}
\newcommand{\maybe}[1]{\text{?`}\qquad #1 \qquad \text{?}}

\newcommand{\sm}{s}
\DeclareSymbolFont{bbold}{U}{bbold}{m}{n}
\DeclareSymbolFontAlphabet{\mathbbb}{bbold}

\newcommand{\done}{\ensuremath{\mathbbb{1}}}

\newcommand{\makeushort}[3]{%
	\setbox0=\hbox{$\mathsurround=0pt #2{#3}$}%
	\hbox to 1\wd0{\hss\underbar{\hbox to #1\wd0{\hss\box0\hss}}\hss}}

\newcommand{\chain}[1]{\langle #1\rangle}
\newcommand{\btw}[3]{#1 \preceq #2 \preceq #3}
\newcommand{\sbtw}[3]{#1 \prec #2 \prec #3}
\newcommand{\pto}{\rightharpoonup}
\newcommand{\im}{\mathrm{im}}
\newcommand{\stil}{\widetilde{s}}
\makeatletter
\newcommand{\hmag}[1][]{\@ifmtarg{#1}{H^{\Sigma}}{H^{\Sigma,#1}}}
\def\thmqedhere{\expandafter\csname\csname @currenvir\endcsname @qed\endcsname}
\let\c@equation\c@thm
\makeatother

\numberwithin{equation}{section}

\begin{document}

\begin{abstract}
  Magnitude is a numerical invariant of enriched categories, including in
  particular metric spaces as $[0,\infty)$-enriched categories.  We show
  that in many cases magnitude can be categorified to a homology theory for
  enriched categories, which we call magnitude homology (in fact, it is a
  special sort of Hochschild homology), whose graded Euler characteristic
  is the magnitude.  Magnitude homology of metric spaces generalizes the
  Hepworth--Willerton magnitude homology of graphs, and detects geometric
  information such as convexity.
\end{abstract}

\maketitle


\section{Introduction}
\label{sec:introduction}

Magnitude is a numerical invariant of enriched categories, introduced by
the first author in~\cite{leinster:ec-cat,leinster:magnitude}.  
See~\cite{lm:mag-survey} for an overview; here we summarize the definition.
If $\V$ is a monoidal category, a $\V$-enriched category (or ``$\V$-category'') $X$ has a set of ``objects'' along with hom-objects $X(x,y)\in \V$ and identity and composition maps $\done\to X(x,x)$ and $X(y,z) \otimes X(x,y) \to X(x,z)$ satisfying unit and associativity axioms.
To define magnitude, we require in addition a ``size function'' $\size : \ob(\V) \to \k$, which is a monoid homomorphism from isomorphism classes of objects of $\V$ to the multiplicative monoid of a (semi)ring $\k$.
Given a $\V$-category $X$ with finitely many objects, one then defines a matrix $Z_X$ over $\k$ with entries $\size(X(x,y))$, and the magnitude of $X$ is the sum of all the entries of the inverse matrix $Z_X^{-1}$ (if it exists).

This (perhaps odd-looking) definition is motivated by the fact that the \emph{Euler characteristic} of the nerve of a finite poset $X$ can be computed as the sum of all the values of its M\"{o}bius function, which are precisely the entries of $Z_X^{-1}$ when $X$ is regarded as a category enriched over the poset $\mathbf{2}=\{\bot,\top\}$ of truth values, with $\size(\bot)=0$ and $\size(\top)=1$.
More generally, the first author showed that magnitude coincides with Euler characteristic if $X$ is any ordinary category whose nerve contains finitely many nondegenerate simplices, with $\V=\FinSet$ and $\size=$ cardinality.
Thus, magnitude is a generalization of Euler characteristic.

One particularly interesting example of magnitude is when $\V=[0,\infty]$ with the opposite ordering (that is, there is at most one morphism $k\to \ell$, and there is one exactly when $k\ge \ell$) and the monoidal structure of addition, in which case Lawvere~\cite{lawvere:metric-spaces} showed that $\V$-categories can be identified with (extended quasi-pseudo-)metric spaces.
If we take $\size(d) = e^{-td}$ for a real number $t$ (a ``length scaling factor''), we obtain a 1-parameter family of magnitudes of finite metric spaces, which have since been shown to capture a good deal of geometric information~\cite{lm:mag-survey,bc:mag-cpteuc,gg:mag-domeuc}.

The Euler characteristic of a space, on the other hand, is a fairly coarse invariant.
One very important refinement of it is ordinary homology, an \emph{algebraic} invariant consisting of a sequence of abelian groups $H_n(X)$ of which the Euler characteristic is the alternating sum of ranks $\sum_n (-1)^n \,\rnk\, H_n(X)$.
Thus, it is natural to conjecture that magnitude is the alternating sum of ranks of some kind of \emph{magnitude homology} theory, which contains even more geometric information than the numerical magnitude.
In~\cite{hw:mag-hom-gr}, Hepworth and Willerton constructed such a homology theory for the special case of graphs, regarded as metric spaces with the shortest path metric (or equivalently as categories enriched over the sub-monoidal-category $\N \subseteq [0,\infty]$).

The purpose of the present paper is to generalize this homology theory to a large class of enriching categories $\V$, and in particular to arbitrary metric spaces.
Specifically, when $\V$ is a semicartesian monoidal category (i.e.\ the monoidal unit is the terminal object), and $\Sigma:\V\to\A$ is a strong symmetric monoidal functor to an abelian category, we will define the \textbf{magnitude homology} $\hmag_*(X)$ of any $\V$-category $X$.\footnote{In fact, $\hmag_*(X)$ can be identified with the Hochschild homology of $X$ with ``constant coefficients'', but we will make no use of that.}
We will then show that given any ``rank function'' $\rnk : \ob(\A) \to \k$, the composite $\rnk \circ \Sigma$ is a size $\size : \ob(\V)\to\k$, and any sufficiently finite $\V$-category $X$ has a magnitude that can be computed as the Euler characteristic of $\hmag_*(X)$.

This is very abstract and general, but if we unwind it explicitly in the
case of metric spaces we obtain a calculable algebraic invariant defined
using $\R$-graded chain complexes. Here we take only the first steps in
investigating what information is contained in the
magnitude homology of a metric space; but since the first appearance of this work
as a preprint, other authors have built on it to obtain more sophisticated results. (See the works cited at the start of section~\ref{sec:geo-met} and
in section~\ref{sec:open-problems}.)
We show, for instance, that $\hmag_1(X)=0$ if and only if $X$ is \emph{Menger convex}, i.e.\ for any two distinct points there is another point strictly between them.
In particular, this implies that a closed subset $X\subseteq \mathbb{R}^n$ satisfies $\hmag_1(X)=0$ if and only if it is convex in the usual sense.
The meaning of $\hmag_n$ for $n>1$ is less clear, but for instance $\hmag_2$ seems to tell us something about the \emph{non-uniqueness} of geodesics connecting pairs of distinct points.

We begin by recalling the notion of magnitude in \cref{sec:mag}.
In \cref{sec:met} we describe explicitly and concretely the case of primary interest, namely the magnitude homology of metric spaces; and in \cref{sec:geo-met} we give some preliminary geometric interpretation of it.
Then in \cref{sec:hmag} we generalize to magnitude homology of any suitable enriched category.
In \cref{sec:rk-chi} we study ranks and Euler characteristics abstractly, enabling us to prove in \cref{sec:categ} the basic result that this general kind of magnitude homology is indeed a categorification of the magnitude.
We conclude with some open questions in \cref{sec:open-problems}.

\paragraph*{Acknowledgments} 
The theory of magnitude homology of enriched categories and metric spaces
was largely developed on the $n$-Category Caf\'{e} blog.  We would like to
thank all the participants in this conversation; in particular, Richard
Williamson contributed some important insights leading to homotopy
invariance (Corollary~\ref{thm:hmag-adj-invar}) and the connection with
Hochschild homology, while Beno\^{i}t Jubin corrected
Definition~\ref{defn:4cuts} and Example~\ref{eg:trees}.  The first author
thanks Aaron Greenspan, Richard Hepworth, Emily Roff and Simon Willerton
for helpful conversations.  We would also like to thank the referee, whose
suggestions we believe made the paper much more readable.

The first author was supported by a Leverhulme Trust Research
Fellowship. The second author was sponsored by The United States Air Force
Research Laboratory under agreement number FA9550-15-1-0053 and and
FA9550-16-1-0292.  The U.S. Government is authorized to reproduce and
distribute reprints for Governmental purposes notwithstanding any copyright
notation thereon.  The views and conclusions contained herein are those of
the authors and should not be interpreted as necessarily representing the
official policies or endorsements, either expressed or implied, of the
United States Air Force Research Laboratory, the U.S. Government, or
Carnegie Mellon University.

\section{Magnitudes of enriched categories and metric spaces}
\label{sec:mag}

We begin by recalling the notion of magnitude of enriched categories
from~\cite{leinster:ec-cat,leinster:magnitude}, including a slight
enhancement of the usual magnitude of metric spaces. For background on
enriched categories, see~\cite{kelly:basic}.
Let $(\V,\otimes,\done)$ be a symmetric monoidal category, and $\k$ a semiring (i.e.\ a ring without additive inverses), related by the following:

\begin{defn}\label{defn:size}
  A \textbf{size} is a function $\size : \ob(\V) \to \k$ that is
  \begin{itemize}
  \item invariant under isomorphism: if $a\cong b$ then $\size a = \size b$, and
  \item multiplicative: $\size(\done) = 1$ and $\size(a\otimes b) = \size a \cdot \size b$.
  \end{itemize}
\end{defn}

Here and throughout this work, all (semi)rings are assumed commutative.

\begin{eg}\label{eg:size-card}
  If $\V = \FinSet$, we can take $\k=\N$ (or, in fact, any semiring at all, since $\N$ is the initial semiring) and $\size$ the cardinality.
\end{eg}

\begin{eg}\label{eg:exp-size}
  If $\V=[0,\infty]$ with the opposite ordering and monoidal structure $+$, we can take $\k=\R$ and $\size a = e^{-a}$.
  This is the traditional choice of a size for $[0,\infty]$, but we can also use $q^a$ for any positive real number $q$.

  Since $q^a = e^{-ta}$ for $t = -\ln q$, using a different value of $q$ with $0<q<1$ is equivalent to scaling all numbers $a\in[0,\infty]$ by a positive real factor first.
  This is the traditional approach to the \emph{magnitude function}, which considers a metric space together with all of its rescalings by positive real factors.
\end{eg}

\begin{eg}\label{eg:size-ultra}
  Let $\V=[0,\infty]$ with the same ordering as in \cref{eg:exp-size}, but the monoidal structure $\max$.
  We will write this as $[0,\infty]_{\max}$ to avoid confusion.
  In this case, it is shown in~\cite[\S 8]{meckes:mag-div} that the only sizes with $\k=\R$ (or indeed any field) are of the form
  \[ \size_{\le k}(\ell) =
    \begin{cases}
      1 &\quad\text{if } \ell \le k\\
      0 &\quad \text{otherwise}
    \end{cases}
    \qquad \text{and}\qquad
    \size_{< k}(\ell) =
    \begin{cases}
      1 &\quad\text{if } \ell < k\\
      0 &\quad \text{otherwise}.
    \end{cases}
  \]
\end{eg}

\begin{eg}\label{eg:genpoly}
  If $\V$ is essentially small, but otherwise arbitrary, we can use the ``monoid semiring'' $\N[\V] = \N[\ob(\V)/\mathord{\cong}]$ of the monoid of isomorphism classes of objects in $\V$.
  Thus the elements of $\N[\V]$ are formal $\N$-linear combinations of isomorphism classes $[v]$, where multiplication is $\N$-linear and $[u] \cdot [v] = [u\otimes v]$.
  This is the universal example: any other size on $\V$ factors uniquely through it.

  In particular, if $\V=[0,\infty]$, then the elements of $\N[\V]$ are formal $\N$-linear combinations of numbers in $[0,\infty]$.
  We might write such an element as
  \[ a_1 [\ell_1] + a_2 [\ell_2] + \cdots + a_n [\ell_n] \]
  but since the multiplication is defined by $[\ell_1]\cdot [\ell_2] = [\ell_1+\ell_2]$, it is more suggestive to write $[\ell]$ as $q^\ell$ for a formal variable $q$, since then $q^{\ell_1} \cdot q^{\ell_2} = q^{\ell_1+\ell_2}$ looks like the ordinary rule for multiplication of powers.
  This yields a representation of elements of $\N[\V]$ as \textbf{generalized polynomials}
  \[ a_1 q^{\ell_1} + a_2 q^{\ell_2} + \cdots + a_n q^{\ell_n} \]
  in which the exponents can belong to $[0,\infty]$, rather than $\N$ as in an ordinary polynomial.
  We write this semiring of generalized polynomials as $\N[q^{[0,\infty]}]$.

  Note that the traditional size function of \cref{eg:exp-size} factors through this universal size via the ``evaluation'' map $\N[q^{[0,\infty]}]\to\R$ that substitutes $e^{-1}$ (or, more generally, any other positive real number) for $q$.
  Thus, the universal size valued in $\N[q^{[0,\infty]}]$ carries all the information of the sizes $e^{-ta}$ for all values of $t$.
\end{eg}

The definition of magnitude involves the following matrix.
Usually (finite) matrices are defined to have \emph{ordered} rows and columns, but for our purposes it is more convenient to consider matrices whose rows and columns are indexed by arbitrary finite sets.
A \emph{square} matrix is one whose rows and columns are indexed by the \emph{same} finite set.
Categorically speaking, there is a category whose objects are finite sets and whose morphisms $A\to B$ are functions $A\times B \to \k$, with composition by matrix multiplication; the square matrices are the endomorphisms in this category.

\begin{defn}
  Let $X$ be a $\V$-category with finitely many objects.
  Its \textbf{zeta function} is the $\ob(X)\times \ob(X)$ matrix over $\k$ defined by
  \[ Z_X(x,y) = \size(X(x,y)). \]
\end{defn}

\begin{defn}[\cite{leinster:ec-cat,leinster:magnitude}]\label{defn:mag}
  We say that $X$ \textbf{has M\"{o}bius inversion} (with respect to $\k$ and $\size$) if $Z_X$ is invertible over $\k$.
  In this case, the \textbf{magnitude} of $X$ is the sum of all the entries of its inverse matrix $Z_X^{-1}$.
\end{defn}

Since magnitude generalizes Euler characteristic and cardinality, it is sometimes written $\chi(X)$ or $|X|$ or $\# X$.
However, we will use all of those notations for other things, so we will write the magnitude of $X$ as $\mg{X}$, or $\mg[\k]{X}$ or $\mg[\size]{X}$ if necessary to indicate the relevant semiring or size function.

\begin{eg}
  If $\V = \FinSet$ and $\size$ is the cardinality valued in $\Q$, then it is shown in~\cite{leinster:ec-cat} that if $X$ is a finite ordinary category that is skeletal and contains no nonidentity endomorphisms, then $X$ has M\"{o}bius inversion, and its magnitude is equal to the Euler characteristic of (the geometric realization of) its nerve.
\end{eg}

\begin{eg}\label{eg:mag-met}
  As noted by~\cite{lawvere:metric-spaces}, a metric space $X$ can be regarded as a $\V$-category for $\V=[0,\infty]$ as in \cref{eg:exp-size}, where the hom-objects $X(x,y)$ are the distances $d(x,y)$.
  In addition, $[0,\infty]$-categories also include the following generalizations of metric spaces:
  \begin{itemize}
  \item \emph{Pseudo-metric spaces}, which drop the ``separation'' requirement that $d(x,y)=0$ implies $x=y$ (though we still have $d(x,x)=0$ for all $x$).
  \item \emph{Quasi-metric spaces}, which drop the symmetry requirement $d(x,y)=d(y,x)$.
  \item \emph{Extended metric spaces}, which allow distances to take the value $\infty$.
  \end{itemize}
  An arbitrary $[0,\infty]$-category combines all of these generalizations, hence can be called an \emph{extended quasi-pseudo-metric space}.
  Note that a quasi-pseudo-metric space is precisely a $[0,\infty)$-category.
  Additionally, a pseudo-metric space is a metric space if and only if it is \emph{skeletal} in the category-theoretic sense (any two isomorphic objects are equal); but for a quasi-pseudo-metric space to be skeletal means only that $d(x,y)=d(y,x)=0$ implies $x=y$, so it may not be a quasi-metric space.

  With the family of $\R$-valued size functions $e^{-td}$ from \cref{eg:exp-size}, the resulting magnitude of an (extended quasi-pseudo-)metric space was defined in~\cite{leinster:magnitude} and has since been extensively studied; see e.g.~\cite{lm:mag-survey}.
\end{eg}

\begin{eg}\label{eg:mag-gph}
  If $\V= \N$ with the opposite ordering and monoidal structure $+$, a $\V$-category is a quasi-pseudo-metric space whose distances are all integers.
  For instance, this includes any graph with the shortest-path metric.
  Of course we can simply consider any such $X$ as a metric space and apply \cref{eg:mag-met}, but as we will see later this case has some special properties.
\end{eg}

\begin{eg}\label{eg:mag-ultra}
  If $\V=[0,\infty]_{\max}$, then a $\V$-category is an (extended quasi-\mbox{pseudo-)}\emph{ultrametric space}.
  The ``ultramagnitude'' of an ultrametric space with respect to the size $\size_{\le k}$ (resp.\ $\size_{<k}$) from \cref{eg:size-ultra} was completely characterized in~\cite[\S8]{meckes:mag-div}: it counts the number of closed (resp.\ open) balls of radius $k$ (which form a partition of $X$).
\end{eg}

\begin{eg}\label{eg:genrat}
  In general, there tend to be more invertible matrices over $\k$ if it is a ring or a field.
  Thus, if $\k$ is given as a semiring, it is natural to universally complete it to a ring or a field.

  In particular, the universal semirings of \cref{eg:genpoly} can easily be completed to rings by simply allowing integer coefficients instead of natural numbers, obtaining the monoid ring $\Z[\V]$.
  When $\V=[0,\infty]$ this yields a ring $\Z[q^{[0,\infty]}]$ of generalized polynomials with integer coefficients but exponents in $[0,\infty]$; whereas when $\V=\N$ as in \cref{eg:mag-gph} it yields the ordinary polynomial ring $\Z[q] = \Z[q^\N]$.

  The polynomial ring $\Z[q]$ is an integral domain, so we can then pass to its field of fractions $\Q(q)$ consisting of formal rational functions.
  But 
$\Z[q^{[0,\infty]}]$ contains zero divisors:
  \[q^\infty (1 - q^{\infty}) = q^\infty - q^{\infty+\infty} = q^\infty - q^\infty = 0.\]
  However, if we omit $\infty$ (thereby requiring all distances in our metric spaces to be finite, i.e.\ omitting the ``extended''), we do get an integral domain $\Z[q^{[0,\infty)}]$.
  Its field of fractions, which we write $\Q(q^{[0,\infty)})$ or $\Q(q^{\R})$, consists of \textbf{generalized rational functions}
  \[\frac{a_1 q^{\ell_1} + a_2 q^{\ell_2} + \cdots + a_n q^{\ell_n}}{b_1 q^{k_1} + b_2 q^{k_2}+ \cdots b_m q^{k_m}} \]
  in which $a_i,b_j \in \Q$ and $\ell_i,k_j \in \R$.
  (There is no extra generality in allowing $a_i,b_j\in \Q$ and $\ell_i,k_j \in \R$ versus $a_i,b_j\in \Z$ and $\ell_i,k_j \in [0,\infty)$, since we can always multiply top and bottom by $A q^{N}$ for a sufficiently large $A\in \Z$ and $N\in \R$.)
\end{eg}

Note that we can try to ``evaluate'' a generalized rational function at any positive real value for $q$, as we did for generalized polynomials, but the result might not be defined (if the denominator ends up being zero).
Thus, working over the field $\Q(q^{\R})$ of generalized rational functions is a little better-behaved even than considering all real values for $q$ together.
In particular, we have the following:

\begin{thm}\label{thm:mag-met}
  Any finite quasi-metric space has M\"{o}bius inversion over $\Q(q^{\R})$.
\end{thm}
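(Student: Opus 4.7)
The plan is to show that $\det(Z_X)$ is a nonzero element of the integral domain $\Z[q^{[0,\infty)}]$; since $\Q(q^{\R})$ is its field of fractions (\cref{eg:genrat}), this forces $Z_X$ to be invertible over $\Q(q^{\R})$.

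Enumerate the objects as $x_1,\ldots,x_n$ and expand via the Leibniz formula. Because $Z_X(x_i,x_j)=q^{d(x_i,x_j)}$, each product along a permutation collapses to a single power of $q$:
\[ \det(Z_X) = \sum_{\sigma\in S_n} \operatorname{sgn}(\sigma)\, q^{E(\sigma)}, \qquad E(\sigma) := \sum_{i=1}^n d(x_i,x_{\sigma(i)}) \in [0,\infty). \]
The identity permutation contributes $q^0 = 1$. The strategy is to show that this term cannot be cancelled — equivalently, that $\sigma = \mathrm{id}$ is the unique permutation with $E(\sigma) = 0$ — so that the coefficient of $q^0$ in $\det(Z_X)$ equals $1$.

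Suppose $E(\sigma)=0$. Since all distances are nonnegative, $d(x_i,x_{\sigma(i)})=0$ for every $i$. Fix $i$ and let $k$ be the length of its $\sigma$-orbit, so $\sigma^k(i)=i$. Following the orbit from $x_{\sigma(i)}$ around to $x_{\sigma^k(i)} = x_i$ and applying the triangle inequality,
\[ d(x_{\sigma(i)},x_i) \leq \sum_{j=1}^{k-1} d(x_{\sigma^j(i)},x_{\sigma^{j+1}(i)}) = 0. \]
Thus both $d(x_i,x_{\sigma(i)})$ and $d(x_{\sigma(i)},x_i)$ vanish, which is precisely the condition for $x_i \cong x_{\sigma(i)}$ in $X$ as a $[0,\infty)$-category. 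Since $X$ is skeletal, $\sigma(i)=i$; this holds for every $i$, so $\sigma=\mathrm{id}$, and the expansion above shows $\det(Z_X) = 1 + \sum_{\sigma\ne \mathrm{id}} \operatorname{sgn}(\sigma)\, q^{E(\sigma)}$ with all exponents $E(\sigma)>0$, hence is nonzero.

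The only delicate point is recognizing that skeletality of a (possibly non-symmetric) $[0,\infty)$-category means two distinct points cannot be at distance zero from each other in both directions; once this is identified, the triangle inequality supplies the missing direction around each orbit and no further obstacle arises.
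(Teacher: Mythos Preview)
Your argument is correct and shares its core idea with the paper's proof: both show that $\det Z_X$ is of the form $1 + (\text{terms with strictly positive exponent in }q)$ and hence nonzero. The paper packages this by endowing $\Q(q^{\R})$ with an ordered-field structure in which $q$ is infinitesimal, so that the off-diagonal entries of $Z_X$ are infinitesimal and the determinant is $1$ plus finitely many infinitesimals, hence positive. Your Leibniz-formula computation of the $q^0$ coefficient achieves the same thing more elementarily, without introducing the order.

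One point on which your version is more careful: the paper reads ``skeletality'' as $d(x,y)>0$ for $x\neq y$, but categorically a skeletal $[0,\infty)$-category only forbids $d(x,y)=d(y,x)=0$ for $x\neq y$; your orbit argument via the triangle inequality shows that $E(\sigma)=0$ forces both directions to vanish around each cycle, so the result holds under the weaker categorical hypothesis as stated.
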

\begin{proof}
  The field $\Q(q^{\R})$ can be made into an \emph{ordered} field by inheriting the order of $\Q$ and declaring the variable $q$ to be infinitesimal.
  This means ordering generalized polynomials lexicographically on their coefficients, starting with the smallest (i.e.\ most negative) exponents of $q$.

  Now the condition $d(x,x)=0$ of a metric space means the diagonal entries of $Z_X$ are all $q^0=1$.
  On the other hand, the fact that $d(x,y)>0$ if $x\neq y$ means that all the off-diagonal entries $q^{d(x,y)}$ are infinitesimal.
  It follows that the determinant of $Z_X$ is a sum of $1$ (the diagonal term) and a finite number of infinitesimals.
  Thus this determinant is positive, and in particular nonzero; so $Z_X$ is invertible.
\end{proof}

\begin{rmk}
  If $X$ is a metric space (i.e.\ its distances are symmetric), then $Z_X$ is even positive definite over $\Q(q^\R)$.
  This follows from the Levy--Desplanques theorem over the ordered field $\Q(q^\R)$, since $Z_X$ is \emph{strictly diagonally dominant}: $Z_X(x,x) > \sum_{y\neq x} |Z_X(x,y)|$.
  (The Levy--Desplanques theorem is usually stated only for real or complex matrices~\cite[Theorem 6.1.10]{hj:matrix-analysis}, but holds over any ordered field~\cite{2444419}.)
\end{rmk}

It follows that \emph{any finite quasi-metric space $X$ has a magnitude lying in $\Q(q^{\R})$.}
We refer to this generalized rational function $\mg[{\Q(q^{\R})}]{X}$ as the \textbf{universal magnitude} of $X$.
Since a generalized polynomial has finitely many zeros (for essentially the same reason as an ordinary polynomial: as $q\to\infty$ the highest-degree term is dominant), we can evaluate the universal magnitude at all but finitely many positive real numbers $q$, obtaining a partial real-analytic function $\R\pto\R$.
By writing $q^\ell = e^{\ell \ln q}$, we can even extend this to a complex-analytic function defined at all but finitely many points of the Riemann surface of the logarithm (the universal cover of $\C \setminus\{0\}$).
If all distances in $X$ are integers as in \cref{eg:mag-gph}, then $\mg[{\Q(q^{\R})}]{X}$ coincides with $\mg[{\Q(q)}]{X}$ and hence lies in $\Q(q)$.
Thus, in this case the magnitude is a rational function of $q$, and in particular extends to a single-valued meromorphic function on all of $\C$.

The traditional \textbf{magnitude function} of $X$ is obtained by evaluating the universal magnitude $\mg[{\Q(q^{\R})}]{X}$ at $q=e^{-t}$ for positive real $t$.
For complex $t$ we can regard $q = e^{-t}$ as a parametrization of the Riemann surface of the logarithm, thereby extending the magnitude function to a complex-analytic function $\C\pto\C$ with finitely many singularities.
Richard Hepworth has pointed out that this parametrization $q=e^{-t}$ is the same as that used in the Laplace transform, which may be relevant for relating our perspective to the analytic generalizations of magnitude to infinite metric spaces~\cite{meckes:posdef,meckes:mag-div}.

Finally, we recall that magnitude can be generalized using \emph{weightings}.
The following definitions and theorems are all from~\cite{leinster:ec-cat,leinster:magnitude}.

\begin{defn}
  A \textbf{weighting} on a finite $\V$-category $X$ is a function $w:\ob(X) \to \k$ such that $\sum_{y} \size(X(x,y)) \cdot w(y) = 1$ for all $x\in X$.
  A \textbf{coweighting} on $X$ is a weighting on $X\op$.
\end{defn}

\begin{thm}
  If $\k$ is a field, then a $\V$-category $X$ has M\"{o}bius inversion if and only if it has a unique weighting $w$, and if and only if it has a unique coweighting $v$, in which case $\mg X = \sum_x w(x) = \sum_x v(x)$.\qed
\end{thm}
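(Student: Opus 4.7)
The plan is to reformulate everything as linear algebra over $\k$. Writing $n = \size\ob(X)$ and identifying $\k^{\ob(X)} \cong \k^n$, view $Z_X$ as a linear endomorphism of $\k^n$ and let $\mathbf{1}$ denote the all-ones vector. Then a weighting is precisely a vector $w$ satisfying $Z_X w = \mathbf{1}$, and a coweighting is a vector $v$ satisfying $Z_X^T v = \mathbf{1}$. The magnitude, being the sum of all entries of $Z_X^{-1}$, is $\mathbf{1}^T Z_X^{-1} \mathbf{1}$ whenever it is defined.

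If $Z_X$ is invertible, then $w = Z_X^{-1}\mathbf{1}$ and $v = (Z_X^T)^{-1}\mathbf{1}$ are the unique weighting and coweighting, and the identities $\sum_x w(x) = \mathbf{1}^T w = \mathbf{1}^T Z_X^{-1}\mathbf{1} = \mg{X}$ and $\sum_x v(x) = v^T \mathbf{1} = \mathbf{1}^T (Z_X^T)^{-T}\mathbf{1} = \mathbf{1}^T Z_X^{-1}\mathbf{1} = \mg{X}$ follow immediately. So the forward direction is routine.

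For the converse, suppose $X$ has a unique weighting $w$. Then $\mathbf{1}$ lies in the image of $Z_X$, and if $u \in \ker Z_X$ is nonzero, then $w + u$ is a distinct weighting, contradicting uniqueness. Hence $\ker Z_X = 0$. This is the step that requires $\k$ to be a \emph{field}: an injective endomorphism of a finite-dimensional $\k$-vector space is automatically surjective, so $Z_X$ is invertible. The coweighting case is identical applied to $Z_X^T$.

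The main (small) obstacle is just ensuring one is careful about where the field hypothesis is used — over a mere semiring, uniqueness of solutions does not force invertibility, and the equivalence in the statement genuinely fails. Everything else is formal matrix manipulation.
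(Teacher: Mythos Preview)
Your argument is correct and is essentially the standard linear-algebra proof found in the references the paper cites. Note that the paper itself does not prove this theorem: it is stated with a \qed and attributed to \cite{leinster:ec-cat,leinster:magnitude}, so there is no ``paper's own proof'' to compare against beyond those sources, which use exactly the reasoning you give. One cosmetic remark: you write $n = \size\ob(X)$, but in this paper $\size$ denotes the size function $\ob(\V)\to\k$, not cardinality; better to write $n = |\ob(X)|$ or simply say ``let $n$ be the number of objects.''
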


\begin{thm}
  If a $\V$-category $X$ has both a weighting $w$ and a coweighting $v$, then $\sum_x w(x) = \sum_x v(x)$.\qed
\end{thm}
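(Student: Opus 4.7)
The plan is a one-line double-sum manipulation, essentially the observation that $w$ and $v$ serve as left and right inverses (in an averaged sense) to the zeta matrix $Z_X$.

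First I would unpack the definitions explicitly. A weighting $w$ satisfies $\sum_y Z_X(x,y)\, w(y) = 1$ for every $x \in \ob(X)$. A coweighting is a weighting on $X\op$, and since $Z_{X\op}(x,y) = Z_X(y,x)$, this means a coweighting $v$ satisfies $\sum_x v(x)\, Z_X(x,y) = 1$ for every $y \in \ob(X)$.

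The key step is then to compute $\sum_x w(x)$ by inserting the coweighting identity $1 = \sum_y v(y)\, Z_X(y,x)$ in front of $w(x)$, swap the order of summation (which is legal since we are dealing with finite sums in the commutative semiring $\k$), and then collapse the inner sum using the weighting identity:
\[
\sum_x w(x) \;=\; \sum_x \Bigl(\sum_y v(y)\, Z_X(y,x)\Bigr) w(x) \;=\; \sum_y v(y) \Bigl(\sum_x Z_X(y,x)\, w(x)\Bigr) \;=\; \sum_y v(y).
\]

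There is essentially no obstacle here — the only thing to be a little careful about is the direction of the indices, since $\V$ is not assumed symmetric and $X$ need not be self-dual, so one must distinguish $Z_X(x,y) = \size(X(x,y))$ from $Z_X(y,x) = \size(X(y,x))$ and make sure the weighting and coweighting axioms are applied with the correct variable held fixed. Once the indices are lined up, the calculation above is immediate and needs no hypothesis on $\k$ (in particular, it does not require $\k$ to be a field, nor does it require $X$ to have Möbius inversion). This is exactly why the statement is useful: it gives an equality of ``magnitudes'' computed from $w$ and from $v$ in situations where neither is known to be unique.
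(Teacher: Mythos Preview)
Your argument is correct and is exactly the standard double-counting proof (this is the argument in the references \cite{leinster:ec-cat,leinster:magnitude} from which the paper quotes the result; the paper itself omits the proof, marking the theorem with \qed). One minor remark: you do not actually need commutativity of $\k$ for the manipulation, only distributivity on both sides and interchange of finite sums, since every product that appears is already in the fixed order $v\cdot Z_X\cdot w$.
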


\begin{defn}\label{defn:mag2}
  A $\V$-category $X$ \textbf{has magnitude} if it has both a weighting $w$ and a coweighting $v$, in which case its \textbf{magnitude} is the common value of $\sum_x w(x)$ and $\sum_x v(x)$.
\end{defn}

One virtue of this generalized notion of magnitude is that it is nontrivially invariant under equivalence of $\V$-categories.
A $\V$-category can only have M\"{o}bius inversion if it is skeletal, since two distinct isomorphic objects would produce two identical rows in $Z_X$.
But weightings and coweightings do transfer across equivalences:

\begin{thm}[{\cite[Lemma 1.12]{leinster:ec-cat} and~\cite[Proposition 1.4.1]{leinster:magnitude}}]\label{thm:mag-eqv0}
  If $X$ and $X'$ are equivalent $\V$-categories, and $X$ has a weighting, a coweighting, or has magnitude, then so does $X'$.
\end{thm}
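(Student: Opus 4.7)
The plan is to reduce the problem to the case where the equivalence is an \emph{isomorphism}, by passing through skeletons. Recall that every $\V$-category $X$ admits a skeletal sub-$\V$-category $\sk X$, obtained by choosing one object from each isomorphism class, and the inclusion $\sk X\hookrightarrow X$ is an equivalence of $\V$-categories. The argument then splits into two parts.

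First, I would show that $X$ has a weighting iff $\sk X$ does (and similarly for coweightings). Given a weighting $w$ on $X$, define $w'\colon \ob(\sk X)\to\k$ by $w'(x) = \sum_{y\in X,\, y\cong x} w(y)$. Using iso-invariance of $\size$, which gives $\size(X(x,y)) = \size(X(x,z))$ whenever $y\cong z$, one computes for $x \in \sk X$:
\[ \sum_{y\in \sk X} \size(X(x,y))\, w'(y) = \sum_{y\in \sk X}\sum_{z \in X,\, z\cong y} \size(X(x,z))\, w(z) = \sum_{z\in X} \size(X(x,z))\, w(z) = 1. \]
Conversely, given $w'$ on $\sk X$, pick one representative in each isomorphism class of $X$, assign to it the value of $w'$ at the corresponding skeletal object, and set $w$ to zero elsewhere; an analogous computation (using the same iso-invariance of $\size$ to replace $x$ by its skeletal representative) shows this is a weighting on $X$. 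The coweighting case follows by applying the same argument to $X\op$.

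Second, I would show that equivalent $\V$-categories have isomorphic skeletons. Given an equivalence $F\colon X\to X'$, composing with $\sk X\hookrightarrow X$ and a quasi-inverse $X'\to \sk X'$ of $\sk X'\hookrightarrow X'$ yields an equivalence $\sk X\to \sk X'$. Between skeletal $\V$-categories any equivalence is an isomorphism: essential surjectivity combined with skeletality of the target forces surjectivity on objects, while full faithfulness combined with skeletality of the source forces injectivity on objects, so the underlying object map is a bijection, and full faithfulness then upgrades this to an isomorphism on each hom-object.

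Combining these, a weighting on $X$ transfers to $\sk X$, then across the isomorphism $\sk X\cong \sk X'$, then to $X'$; dually for coweightings; and the ``has magnitude'' case follows since that is just the conjunction of the two. The main obstacle will be the second step: one must be careful about what ``essentially surjective'' means for $\V$-functors (the existence of isomorphisms in the underlying category of $\V$, i.e.\ invertible morphisms in the hom-objects from the unit), and verify that the object-level bijection is genuinely respected by the enriched structure, so that the resulting $\V$-functor is an honest isomorphism of $\V$-categories rather than merely an equivalence.
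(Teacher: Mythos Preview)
Your proposal is correct and uses essentially the same idea as the paper. The paper's proof is much terser: it simply observes that, rather than dividing the total weight on an isomorphism class equally among all objects of the corresponding class in $X'$ (which would require natural numbers to be invertible in $\k$), one can concentrate all of it on a single chosen representative. Your factorization through skeletons is a clean way of packaging exactly this construction.
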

\begin{proof}
  In the cited references this is proven under the assumption that all natural numbers are invertible in $\k$, so that the total weight on one isomorphism class in $X$ can be divided equally among all objects in the corresponding isomorphism class of $X'$.
  But this is unnecessary: we can simply choose one representative of the latter isomorphism class to give all the weight to.
\end{proof}

\begin{thm}[{\cite[Proposition 2.4]{leinster:ec-cat} and~\cite[Proposition 1.4.1]{leinster:magnitude}}]\label{thm:mag-eqv}
  If $X$ and $X'$ are equivalent and both have magnitude, then $\mg X = \mg{X'}$.\qed
\end{thm}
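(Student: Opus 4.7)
The plan is to combine the existing preservation result for weightings and coweightings (\cref{thm:mag-eqv0}) with an equivalence-aware version of the classical identity $\sum w = \sum v$. Concretely, let $X$ have weighting $w$ and coweighting $v$, and let $X'$ have weighting $w'$ and coweighting $v'$, so that $\mg X = \sum_x w(x) = \sum_x v(x)$ and $\mg{X'} = \sum_{x'} w'(x') = \sum_{x'} v'(x')$. To match these, I only need to produce a single equality between a sum belonging to $X$ and one belonging to $X'$, say $\sum_x w(x) = \sum_{x'} v'(x')$.

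To produce that identity, I would fix an equivalence $F : X \to X'$ with pseudoinverse $G : X' \to X$, so that there are $\V$-natural isomorphisms $GF\cong 1_X$ and $FG\cong 1_{X'}$. Fully faithfulness, together with these natural isomorphisms, gives isomorphisms of hom-objects $X'(y', F(x)) \cong X(G(y'), x)$ for all $x\in X$ and $y'\in X'$. Since the size function $\size$ is isomorphism-invariant, the corresponding entries of $Z_X$ and $Z_{X'}$ agree as elements of $\k$. This is the bridge that allows weighting/coweighting conditions on $X$ and $X'$ to interact inside a single double sum.

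The main computation is then the following mixed-double-sum manipulation, in the spirit of the proof that $\sum w = \sum v$ on a single $\V$-category. Starting from $\sum_x w(x)$ and inserting the coweighting identity $\sum_{y'} v'(y') \size(X'(y', F(x))) = 1$, I get
\[
\sum_x w(x) \;=\; \sum_{x,\,y'} w(x)\,v'(y')\,\size(X'(y', F(x))) \;=\; \sum_{x,\,y'} w(x)\,v'(y')\,\size(X(G(y'), x)),
\]
and now applying the weighting identity $\sum_x \size(X(G(y'), x))\, w(x) = 1$ collapses the inner sum to give $\sum_{y'} v'(y')$. Combining the endpoints, $\mg X = \sum_x w(x) = \sum_{y'} v'(y') = \mg{X'}$.

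I do not anticipate a serious obstacle: the only nontrivial step is checking that the isomorphism $X'(y', F(x)) \cong X(G(y'), x)$ really follows from fully-faithfulness of $F$ plus the natural isomorphism $GF\cong 1_X$, but this is standard for equivalences of enriched categories, and it is the only place the hypothesis of equivalence (rather than, say, mere existence of weightings on both sides) enters. Everything else reduces to the same Fubini-style rearrangement that proves the single-category identity, so the argument is essentially a relative version of that calculation.
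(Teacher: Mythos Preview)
Your argument is correct. The mixed double-sum manipulation works exactly as you describe: the isomorphism $X'(y',F(x))\cong X(G(y'),x)$ follows from full faithfulness of $F$ together with $FG\cong 1_{X'}$, and once you have $\size(X'(y',F(x)))=\size(X(G(y'),x))$ the two collapsing steps using the coweighting identity for $v'$ and the weighting identity for $w$ go through verbatim.

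As for comparison with the paper: the paper does not actually give a proof here---it simply cites the original references and closes with a \qed. The route implicit in those references (and visible in the proof sketch of \cref{thm:mag-eqv0} just above) is to first \emph{transfer} the weighting $w$ on $X$ to a weighting on $X'$ with the same total, by placing all the weight of each isomorphism class on a chosen representative, and then invoke the single-category identity $\sum w'=\sum v'$ on $X'$. Your approach collapses these two steps into one computation, working directly with $w$ on $X$ and $v'$ on $X'$ without ever constructing an intermediate weighting on $X'$. This is slightly more economical and makes transparent that the only fact about the equivalence being used is the hom-object isomorphism (and hence equality of sizes), rather than anything about how objects partition into isomorphism classes.
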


Not every category with magnitude in the sense of \cref{defn:mag2} is even equivalent to one with M\"{o}bius inversion (\cref{defn:mag}).
For instance,~\cite[Examples 4.3 and 4.5]{bl:ec-div} are skeletal categories having magnitude but not M\"{o}bius inversion.
However, we will not be very concerned with such examples in this paper, as our main criterion for relating magnitude homology to magnitude implies that the category has M\"{o}bius inversion (\cref{thm:hmag}).

\section{Magnitude homology of metric spaces}
\label{sec:met}

In this section we give a concrete description of the magnitude homology of a metric space and state its relationship to the magnitude.
A full proof of the latter will be obtained in \cref{sec:categ} as a consequence of a more general theory.

Let $X$ be a metric space; our goal is to describe a homology theory $H_*(X)$ such that when $X$ is finite, the alternating sum of ranks $\sum_n (-1)^n\,\rnk\, H_n(X)$ recovers its magnitude.
However, this idea presents us with two problems right from the start.
Firstly, $\mg{X}$ may not be an integer, whereas the rank of an abelian group is always an integer and hence so is an alternating sum of such.
Secondly, since the magnitude \emph{function} of $X$ is more geometrically meaningful than the bare numerical magnitude, we would like to recover it as well; but how can an alternating sum of ranks yield a real or complex function?

The solution to both problems is to aim, not for the magnitude or magnitude function, but the universal magnitude $\mg[{\Q(q^{\R})}]{X}$, or more precisely for a sort of ``asymptotic expansion'' of it.
Recall that the ring $\Q(q)$ of ordinary rational functions (which we might write $\Q(q^\Z)$ for consistency) can be embedded in the field $\Q\ser q$ of formal Laurent series, essentially by performing polynomial long division.
In~\cite{hw:mag-hom-gr} this was used to categorify the magnitude of a \emph{graph} by noting that when $\mg[{\Q(q)}]{X}$ is embedded into $\Q\ser q$ it lands in the subring $\Z\pser{q}$ of power series with integer coefficients, and then identifying these coefficients as alternating sums of the ranks of an $(\N,\N)$-bigraded homology theory.
That is, for a graph $X$ we have $\mg[{\Q(q)}]{X} = \sum_{\ell\in\N} a_\ell q^\ell$ in $\Q\ser q$, where $a_\ell = \sum_{n\in\N} (-1)^n \,\rnk\, H_{n,\ell}(X)$.

We can do something similar for arbitrary metric spaces $X$ by analogously embedding the field $\Q(q^{\R})$ of generalized rational functions in a field of ``generalized power series''.
Intuitively, such generalized power series should be expressions such as $\sum_{\ell\in\R} a_\ell q^\ell$, which can be represented formally by the coefficients function $a : \R\to\Q$.
Such expressions can always be added pointwise, and we would expect to be able to multiply them by the usual Cauchy product:
\[ \left(\sum_\ell a_\ell q^\ell\right) \left(\sum_\ell b_\ell q^\ell\right) = \sum_\ell \left(\sum_{j+k=\ell} a_j b_k\right) q^\ell. \]
The problem with this is that when we allow the exponents to be arbitrary real numbers, the sum $\sum_{j+k=\ell} a_j b_k$ is no longer finite.
(In fact this problem already arises for bi-infinite Laurent series $\sum_{\ell\in\Z} a_\ell q^\ell$, but can also occur with fractional exponents that are all positive.)
Thus, we need to impose some restriction on the \textbf{supports} $\Set{ \ell\in\R | a_\ell\neq 0}$ of our generalized power series.

The literature contains many kinds of generalized power series.
In a preprint version of this paper we used \emph{Hahn series}, which require that all such supports be well-ordered.
This suffices to ensure the Cauchy product is well-defined, but the field of Hahn series is much larger than necessary, and ill-behaved in certain formal ways (e.g.\ a Hahn series may not be the limit of its partial sums in the natural topology of the Hahn series field).
A first attempt at a smaller field might be the set of series of order type $\le\omega$, but these are not even closed under multiplication.
Thus, we will instead use \emph{Novikov series}~\cite{novikov:multivalued}; these happen to be the topological closure of $\Q(q^\R)$ in the Hahn series field, but we will give an explicit description.

\begin{defn}\label{defn:novikov-series}
  A \textbf{Novikov series} (with rational coefficients and real exponents)
  is a function $a:\R\to\Q$, written as a formal sum $\sum_{\ell\in\R}
  a_\ell q^\ell$, whose support is \textbf{left-finite}, i.e.\ for any
  $N\in \R$ the set $\Set{\ell < N | a_\ell \neq 0}$ is finite.
  We write $\Q\ser{q^\R}$ for the set of Novikov series.
\end{defn}

In the next theorem we compile the basic properties of $\Q\ser{q^\R}$.
Recall that a \emph{valuation} on a field $\k$ is a group homomorphism $\nu : \k^\times \to \Gamma$ from the multiplicative group of $\k$ to a totally ordered abelian group $\Gamma$ such that $\nu(a+b) \ge \min(\nu(a),\nu(b))$ with equality if $\nu(a)\neq\nu(b)$.
If $\Gamma$ is a subgroup of $\R$, then such a valuation induces a metric on $\k$ by $d(a,b) = e^{-\nu(a-b)}$; if $\k$ is complete in this metric it is called a \emph{complete valued field}.
Finally, an ordered field is called \emph{non-Archimedean} if it contains \emph{infinitesimals}: elements $a$ such that $0<a<\frac{1}{n}$ for all $n\in\N$.

\begin{thm}\label{thm:novikov-properties}
  $\Q\ser{q^\R}$ is a non-Archimedean ordered field under pointwise addition and the Cauchy product.
  It is a complete valued field with a valuation in $\R$, in which any Novikov series is the limit of its partial sums.
  Moreover, $\Q(q^\R)$ embeds in $\Q\ser{q^\R}$, and the latter is the Cauchy completion of the former in its induced valuation metric.
\end{thm}
\begin{proof}
  Left-finiteness ensures that the sums $\sum_{j+k=\ell} a_j b_k$ in the Cauchy product are finite, and that the Cauchy product itself again has left-finite support.
  Verification of the ring axioms is entirely straightforward, and essentially analogous to ordinary formal Laurent series (indeed, formal Laurent series with rational coefficients are precisely the Novikov series with integer exponents).

  To construct multiplicative inverses, first note that by multiplying by some $q^k$ it suffices to invert Novikov series of the form $\sum_{\ell\ge 0} a_\ell q^\ell$ where $a_0 \neq 0$.
  If the reciprocal of such a series is $\sum_{\ell\ge 0} b_\ell q^\ell$, then equating the Cauchy product to $1$ gives us a recursive formula for the $b_\ell$'s just as for ordinary formal power series:
  \begin{equation*}
    b_0 = \frac{1}{a_0} \hspace{2cm}
    b_\ell = - \frac{1}{a_0} \sum_{j+k=\ell\atop j>0} a_j b_k
  \end{equation*}
  This is well-defined once we specify a left-finite support for $b$, which we can take to be the set of all $\N$-linear combinations of exponents in the support of $a$.

  The valuation $\nu(a)$ of a Novikov series $a$ is the smallest exponent with a nonzero coefficient; the proof is just like the $\Z$-valuation of formal Laurent series.
  We define $a>0$ if $a_{\nu(a)}>0$; thus positive powers of $q$ are infinitesimal.

  As noted above, the induced metric is $d(a,b) = e^{-\nu(a-b)}$.
  (Note that unlike for formal Laurent series, the resulting metric topology is \emph{not} the ``adic'' topology induced by the valuation ideal of series with $\nu(a)>0$; indeed the latter ideal is idempotent.)
  A sequence $(a_n)$ in $\Q\ser{q^\R}$ is Cauchy if for any $k\in\R$ there is an $N\in \N$ such that for all $\ell<k$ the coefficients $(a_n)_\ell$ stabilize for $n>N$.
  These stable coefficients then form a Novikov series that is its limit, so $\Q\ser{q^\R}$ is complete.
  Since the coefficients of the partial sums of a given Novikov series $a$ obviously stabilize, they converge to $a$ in this topology.

  The integral domain $\Z[q^{[0,\infty)}]$ of generalized polynomials clearly embeds in $\Q\ser{q^\R}$, and hence so does its field of fractions $\Q(q^\R)$.
  Thus there is an induced valuation and metric on $\Q(q^\R)$.
  Finally, the partial sums of any Novikov series are in $\Q(q^\R)$, so $\Q(q^\R)$ is dense in $\Q\ser{q^\R}$; thus the latter is its Cauchy completion.
\end{proof}

In particular, we can also regard the universal magnitude $\mg[{\Q(q^\R)}]{X}$ as lying in $\Q\ser{q^\R}$.
Our goal, therefore, is to find an $(\N,\R)$-bigraded homology theory $H_{*,*}(X)$ associated to a metric space $X$, such that if $\mg[{\Q\ser{q^\R}}]{X} = \sum_{\ell\in\R} a_\ell q^\ell$ then $a_\ell = \sum_{n\in\N} (-1)^n \,\rnk\, H_{n,\ell}(X)$.
This is similar enough to the situation of~\cite{hw:mag-hom-gr} that we can essentially copy their explicit definition of the magnitude chain complex, simply allowing $\ell$ to range over real numbers rather than integers.

By a \emph{chain complex} we will always mean a homologically graded chain complex in nonnegative degrees, i.e.\ a sequence of objects $\{C_n\}_{n\in\N}$ of some abelian category with differentials $d_n:C_n \to C_{n-1}$ such that $d_{n-1}\circ d_n = 0$.

\begin{defn}\label{defn:mc}
  Let $X$ be a metric space.
  Its \textbf{magnitude complex} is an $\R$-graded chain complex defined as follows: $\MC_{n,\ell}(X)$ is the free abelian group generated by symbols $\chain{x_0,\dots,x_n}$, where each $x_i\in X$, such that $d(x_0,x_1)+\cdots+d(x_{n-1},x_n) = \ell$ and each $x_i \neq x_{i+1}$:
  \[ \MC_{n,\ell}(X) = \Z\Big[ \{ \chain{x_0,\dots,x_n} \mid d(x_0,x_1)+\cdots+d(x_{n-1},x_n) = \ell \text{ and } \forall i, x_i \neq x_{i+1} \} \Big]. \]
  The boundary map $d_n : \MC_{n,\ell}(X) \to \MC_{n-1,\ell}(X)$ is an alternating sum
  \[ d_n = \sum_{i=0}^n (-1)^i d_n^i \]
  where $d_n^i$, for $0<i<n$, discards the $i^{\mathrm{th}}$ point as long as this doesn't change the total distance:
  \[ \hspace{-1cm} d^i_n(\chain{x_0,\dots,x_n}) =
  \begin{cases}
    \chain{x_0,\dots,x_{i-1},x_{i+1},\dots,x_n} &  \quad\text{if } d(x_{i-1},x_i) + d(x_i,x_{i+1}) = d(x_{i-1}, x_{i+1})\\
    0 &\quad \text{otherwise},
  \end{cases}\hspace{-1cm}
  \]
  and $d^0_n = d^n_n = 0$.\footnote{We include $i=0$ and $i=n$ in the definition of $d$ even though $d^0_n=d^n_n=0$ for metric spaces, because in the more general context of \cref{sec:hmag} there are a $d^0_n$ and $d^n_n$ that need not vanish.}
\end{defn}

We leave it to the reader to verify that this is a chain complex, i.e.\ that $d\circ d = 0$.
Note that if $X$ is a graph with the shortest path metric, then $\MC_{n,\ell}(X)=0$ unless $\ell\in\N$, and its nonzero values are precisely the magnitude chain complex of a graph as defined in~\cite{hw:mag-hom-gr}.

\begin{defn}\label{defn:hmag-met}
  The \textbf{magnitude homology} $H_{*,*}(X)$ of $X$ is the homology of the $\R$-graded chain complex $\MC_{*,*}(X)$.
\end{defn}

In \cref{sec:hmag} we will describe a more general notion of magnitude homology for enriched categories, and in \cref{sec:categ} show that it categorifies the magnitude in a natural way.
Specialized to metric spaces, this will yield the following result:

\begin{thm}[\cref{thm:magcat-met-2}]\label{thm:magcat-met}
  If $X$ is a finite quasi-metric space, then
  \begin{align*}
    \mg[\Q\ser{q^\R}]{X} &= \sum_{n=0}^\infty (-1)^n \Big( \sum_{\ell\in\R} \rnk\, H_{n,\ell}(X) \, q^\ell \Big)\\
    &= \sum_{\ell\in\R} \Big( \sum_{n=0}^\infty (-1)^n \,\rnk\, H_{n,\ell}(X)\Big) q^\ell,
  \end{align*}
  the former infinite sum converging in the topology of $\Q\ser{q^\R}$.
\end{thm}

First, however, we briefly explore the geometric meaning of the magnitude homology for metric spaces.

\section{The geometric interpretation of magnitude homology}
\label{sec:geo-met}

In this section we will describe a few basic geometric properties of metric spaces that are detected by the magnitude homology.
For more recent progress in this area see~\cite{ky:maghom-ordcplx,jubin:maghom,gomi:magcplx,gomi:maghom-geodesic,asao:maghom-geodesic}.

Let $X$ be a metric space. First of all, in \cref{defn:mc} we have $d^0_n =
d^n_n = 0$, and in particular $d_1 = d^0_1 - d^1_1 = 0$, so there are no
0-boundaries and we have:

\begin{thm}
  $H_{0,0}(X)$ is the free abelian group on the points of $X$, and for $\ell>0$ we have $H_{0,\ell}(X)=0$.\qed
\end{thm}

We can also completely describe $H_{1,*}(X)$.

\begin{defn}
  Let $x,y,z\in X$.
  \begin{itemize}
  \item If $d(x,y)+d(y,z)=d(x,z)$, we say that $y$ is \textbf{between} $x$ and $z$. 
  \item If in addition $x\neq y\neq z$, we say $y$ is \textbf{strictly between} $x$ and $z$. 
  \end{itemize}
\end{defn}

\begin{thm}
  The group $H_{1,\ell}(X)$ is the free abelian group on the set of ordered pairs $\chain{x_0,x_1}$ such that $x_0\neq x_1$ and $d(x_0,x_1)=\ell$ and there does not exist any point strictly between $x_0$ and $x_1$.
\end{thm}
\begin{proof}
  The chain group $\MC_{1,\ell}(X)$ is freely generated by all pairs $\chain{x_0,x_1}$ such that $x_0\neq x_1$ and $d(x_0,x_1)=\ell$.
  Since $d_1 = 0$, all such chains are cycles.

  The chain group $\MC_{2,\ell}(X)$ is freely generated by triples $\chain{x_0,x_1,x_2}$ such that $x_0\neq x_1\neq x_2$ and $d(x_0,x_1) + d(x_1,x_2)=\ell$.
  We have $d_2 = -d^1_2$, so the boundary of $\chain{x_0,x_1,x_2}$ is $-\chain{x_0,x_2}$ if $x_1$ is strictly between $x_0$ and $x_2$, and $0$ otherwise.
  Thus, $\chain{x_0,x_2}$ is a boundary just when there is a point strictly between $x_0$ and $x_2$.
\end{proof}

In particular, the complete vanishing of $H_{1,*}(X)$ at all gradings has the following characterization.

\begin{defn}
  Two points $x,y\in X$ are \textbf{non-adjacent} if there exists a point strictly between them, and \textbf{adjacent} otherwise.
  The metric space $X$ is \textbf{Menger convex} if any two distinct points are non-adjacent.
\end{defn}

The term ``Menger convex'' is standard \cite[p.5]{papadopoulos:metricspaces}.
The term ``(non-)adjacent'' is not standard, but it is a faithful extension of the corresponding terminology for graphs.
From this perspective, note that a Menger convex metric space is ``as far from being a graph as possible'', since the shortest-path metric on a graph is \emph{defined} in terms of the adjacent pairs of points.

\begin{cor}\label{cor:menger-H1}
  $H_{1,\ell}(X)$ is freely generated by the ordered pairs of distinct adjacent points of $X$ at distance $\ell$ apart.
  In particular, $H_{1,*}(X)=0$ if and only if $X$ is Menger convex.\qed
\end{cor}

\begin{eg}
  If $X$ is a connected graph with the shortest path metric, then any pair of points at distance $>1$ apart will have a third point between them.
  Thus $H_{1,\ell}(X)$ vanishes except when $\ell=1$, in which case it is free on the pairs $\chain{x_0,x_1}$ such that $d(x_0,x_1)=1$, i.e.\ the oriented edges of $X$.
  This was noted in~\cite[Proposition 9]{hw:mag-hom-gr}.
\end{eg}

Menger convexity may seem a fairly weak condition, but in many cases it is equivalent to a more familiar strong sort of convexity.

\begin{defn}\label{defn:geodesic}
  A metric space $X$ is \textbf{geodesic} if for any points $x,y$ there is an isometry $\gamma :[0,a] \to X$ with $\gamma(0)=x$ and $\gamma(a)=y$ (hence $a=d(x,y)$).
\end{defn}

\begin{thm}
  If a metric space $X$ has the property that closed and bounded subsets of $X$ are compact, then $X$ is Menger convex if and only if it is geodesic.
\end{thm}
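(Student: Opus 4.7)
The forward direction is essentially tautological. If $X$ is geodesic, then for any distinct $x, y \in X$ with $a := d(x, y)$, choose a geodesic isometry $\gamma \colon [0, a] \to X$ with $\gamma(0) = x$ and $\gamma(a) = y$; the point $z := \gamma(a/2)$ is strictly between $x$ and $y$ since $d(x, z) = a/2 = d(z, y)$ sum to $a$ and $z \notin \{x, y\}$.

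For the converse, assume $X$ is Menger convex and proper (closed bounded sets are compact), which in particular makes $X$ complete. I would follow the classical ``midpoints-plus-dyadics'' strategy. First, establish the \emph{midpoint lemma}: for any distinct $x, y$ with $d(x, y) = a$, there exists $m \in X$ with $d(x, m) = d(m, y) = a/2$. Given this, iterate: apply the lemma to $(x, m)$, then to $(m, y)$, and so on, producing a distance-preserving assignment $\gamma_0$ from the dyadic rationals $D = \{ka/2^n : n \geq 0, \, 0 \leq k \leq 2^n\}$ into $X$, with $\gamma_0(0) = x$ and $\gamma_0(a) = y$. Finally, since $\gamma_0$ is $1$-Lipschitz on a dense subset of $[0, a]$ and $X$ is complete, extend $\gamma_0$ by uniform continuity to a continuous map $\gamma \colon [0, a] \to X$, which remains an isometry by continuity.

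The heart of the proof is the midpoint lemma. I would establish it via the continuous function $\Phi \colon X \to [0, \infty)$ defined by $\Phi(z) = \max\{d(x, z),\, d(z, y)\}$. By the triangle inequality $\Phi(z) \geq a/2$, with equality precisely when $z$ is a midpoint. Since $\Phi > a$ outside the closed ball $\bar B(x, a)$ (which is compact by properness) while $\Phi(x) = a$, the infimum $m := \inf_X \Phi$ is attained at some $z_0 \in \bar B(x, a)$, and satisfies $m \geq a/2$. To show $m = a/2$, I argue by contradiction: assuming $m > a/2$, iteratively apply Menger convexity to the pair $(x, z_0)$ or $(z_0, y)$ (whichever distance realizes $\Phi(z_0)$) to produce candidate points, then use compactness of $\bar B(x, a)$ to extract a limit point with $\Phi$-value strictly below $m$, contradicting the minimality. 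The main obstacle is precisely this last step: Menger convexity is purely existential, providing \emph{some} between-point with no quantitative control on its position, so a single application need not decrease $\Phi$. Overcoming this requires properness to convert the existential content of Menger convexity into usable limiting information via iteration; the dyadic extension and completion steps are then routine once midpoints exist.
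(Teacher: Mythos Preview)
The paper does not prove this result at all: its entire proof is a citation to \cite[Theorem 2.6.2]{papadopoulos:metricspaces}. So there is no paper argument to compare against, and your proposal must stand on its own.

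Your overall architecture is the classical one and is sound. The geodesic $\Rightarrow$ Menger convex direction is correct as written. For the converse, the reduction to a midpoint lemma followed by dyadic iteration and extension by completeness is exactly the standard route, and those last two steps are indeed routine once midpoints are in hand.

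The gap is in the midpoint lemma itself, which is where all the content lies. You set $\Phi(z)=\max\{d(x,z),d(z,y)\}$, take a minimizer $z_0$ with $\Phi(z_0)=m$, and then propose to reach a contradiction from $m>a/2$ by ``iteratively applying Menger convexity to $(x,z_0)$ or $(z_0,y)$ \ldots\ then using compactness to extract a limit point with $\Phi$-value strictly below $m$.'' You correctly flag that a single Menger step need not decrease $\Phi$, but you never actually supply the iteration-plus-limit argument. And it is not clear that the scheme you describe can be made to work as stated: if $d(x,z_0)=m$ and you take $w$ strictly between $x$ and $z_0$, then $d(x,w)<m$ but $d(w,y)\le d(w,z_0)+d(z_0,y)$ can exceed $m$, so $\Phi(w)$ may be larger than $m$; iterating on whichever side is currently larger can simply oscillate, and nothing you wrote forces a subsequential limit to have $\Phi$-value \emph{strictly} below $m$ rather than equal to it. The sentence ``overcoming this requires properness to convert the existential content of Menger convexity into usable limiting information via iteration'' is a description of the difficulty, not a resolution of it.

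A cleaner line (and closer to what one finds in the references) is to work inside the compact between-set $B=\{z:d(x,z)+d(z,y)=a\}$ from the outset, so that $\Phi$ depends only on $d(x,\cdot)$. One then runs a genuine bisection: maintain points $p_n,q_n\in B$ with $p_n$ between $x$ and $q_n$, $d(x,p_n)\le a/2\le d(x,q_n)$, and at each step apply Menger convexity to the pair $(p_n,q_n)$ to produce a new point strictly between them (which stays in $B$ and has $d(x,\cdot)$ strictly between $d(x,p_n)$ and $d(x,q_n)$), replacing whichever endpoint keeps $a/2$ bracketed. Compactness of $B$ then lets you pass to a limit; the monotone structure of the bracketing gives the needed control that your $\Phi$-minimization sketch lacks.
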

\begin{proof}
  See for instance~\cite[Theorem 2.6.2]{papadopoulos:metricspaces}.
\end{proof}

\begin{cor}\label{cor:closed-menger}
  A closed subset of $\R^n$ is Menger convex if and only if it is convex in the usual sense.\qed
\end{cor}

\begin{cor}
  If $X$ is a closed convex subset of $\R^n$, then $H_{1,*}(X)=0$.\qed
\end{cor}

On the other hand, any \emph{open} subset $X\subseteq \R^n$ is Menger
convex, since the straight line between two points of $X$ must intersect
the open balls around each of them that are contained in $X$. Hence an open
set $X \subseteq \R^n$ has trivial first homology. However, its closure
$\overline{X}$ need not be convex, and then $H_{1,*}(\overline{X}) \neq 0$
by Corollaries~\ref{cor:menger-H1} and~\ref{cor:closed-menger}.  Since the
closure of an open subset of $\R^n$ is also its Cauchy completion, $H_1$ is
not invariant under Cauchy completion.

If $H_{1,*}(X)$ fails to vanish completely, then its size tells us ``how badly'' $X$ fails to be Menger convex, and the gradings in which it fails to vanish tell us at what ``length scales'' this happens.

\begin{eg}
  Let $X$ be a closed annulus in the plane with inner diameter $\delta$.
  For distinct $x_0,x_1\in X$ there is a point strictly between them unless $x_0$ and $x_1$ are both on the inner boundary.
  The maximum distance between two points on the inner boundary is $\delta$, so $H_{1,\ell}(X)=0$ if $\ell>\delta$.
  If $0<\ell<\delta$, then for any $x_0$ on the inner boundary there are exactly two points $x_1$ on the inner boundary at distance $\ell$, whereas if $\ell=\delta$ there is exactly one (the antipodal point).
  Thus we have:
  \[ H_{1,\ell}(X) =
  \begin{cases}
    \Z[S^1\cdot 2] &\qquad 0<\ell<\delta\\
    \Z[S^1] &\qquad \ell=\delta\\
    0 &\qquad \ell>\delta
  \end{cases}
  \]
\end{eg}

\begin{eg}
  Let $X=X_1\sqcup X_2$ consist of two disjoint closed convex sets in
  $\R^n$ at a distance $\delta$ apart.
  Then if $x_0,x_1$ are both in $X_1$ or both in $X_2$, there is a point strictly between them; whereas if they are in different components then $d(x_0,x_1) \ge \delta$.
  Thus, $H_{1,\ell}(X)=0$ for $\ell<\delta$.
  However, for a suitable choice of $X_1$ and $X_2$ we can arrange that $H_{1,\ell}(X)\neq 0$ for arbitrarily large $\ell$, or even for all $\ell\ge\delta$, such as if $X_1$ is a line and $X_2$ is a point not on that line.
\end{eg}

Note that the previous two examples show that $H_{1,\ell}$ can vanish for
all large $\ell$ but not all small $\ell$, or for all small $\ell$ but not for all large $\ell$.

The geometric meaning of $H_{n,*}(X)$ for $n>1$ is not as obvious, but we
can get some idea by looking at $n=2$.  Let us introduce some more
terminology.

We write $\btw xyz$ to mean that $y$ is between $x$ and $z$, and $\sbtw xyz$ to mean that $y$ is strictly between $x$ and $z$.
In a general metric space, these notations are fundamentally ternary; but
in familiar spaces like $\R^n$, any of the following pairs of ternary conditions ensure that four points $x,y_1,y_2,z$ are collinear in that order.
\begin{enumerate}
\item $\btw x{y_1}{y_2}$ and $\btw x{y_2} z$.\label{item:btw1}
\item $\btw x{y_1}z$ and $\btw {y_1}{y_2} z$.\label{item:btw3}
\item $\btw x{y_1}{y_2}$ and $\btw {y_1}{y_2} z$.\label{item:btw2}
\end{enumerate}
In a general metric space, we can say the following:

\begin{lem}
  In a metric space $X$, conditions~\ref{item:btw1} and~\ref{item:btw3} above are equivalent, and both imply~\ref{item:btw2}.
\end{lem}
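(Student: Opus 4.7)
The plan is to introduce a single ``chain equation''
\[ d(x,z) \;=\; d(x,y_1) + d(y_1,y_2) + d(y_2,z) \tag{$\star$} \]
and show that each of (1) and (2) is equivalent to it. Once we have (1) $\iff$ ($\star$) $\iff$ (2), the equivalence of (1) and (2) is immediate, and the implication to (3) follows because (1) already contains the clause $\btw{x}{y_1}{y_2}$ while (2) already contains $\btw{y_1}{y_2}{z}$, so whichever hypothesis we start from, the other clause of (3) comes for free via the equivalence.

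To prove (1) $\Rightarrow$ ($\star$), I would simply substitute the first equation of (1) into the second: $d(x,z) = d(x,y_2) + d(y_2,z) = \bigl(d(x,y_1)+d(y_1,y_2)\bigr)+d(y_2,z)$. Conversely, to prove ($\star$) $\Rightarrow$ (1), I would invoke the two triangle inequalities $d(x,y_2) \le d(x,y_1)+d(y_1,y_2)$ and $d(x,z) \le d(x,y_2)+d(y_2,z)$. Assuming ($\star$), chaining these gives
\[ d(x,z) = d(x,y_1)+d(y_1,y_2)+d(y_2,z) \;\ge\; d(x,y_2)+d(y_2,z) \;\ge\; d(x,z), \]
so both inequalities are equalities, which is exactly the assertion of (1). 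The argument for (2) $\iff$ ($\star$) is symmetric, using instead the triangle inequalities $d(y_1,z)\le d(y_1,y_2)+d(y_2,z)$ and $d(x,z)\le d(x,y_1)+d(y_1,z)$.

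I do not anticipate a real obstacle; the only conceptual step is recognising the chain equation ($\star$) as the common content of (1) and (2). Everything else reduces to a one-line substitution in one direction and a squeeze between two triangle inequalities in the other. The fact that (3) is genuinely weaker (the converse implication failing in general) needn't be addressed in the proof, but it is worth noting that the argument does not go through in that direction because (3) alone provides no way to bound $d(x,z)$ from below in terms of the other distances.
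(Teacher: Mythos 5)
Your proof is correct and is essentially the paper's own argument: the paper likewise first derives the chain equality $d(x,z)=d(x,y_1)+d(y_1,y_2)+d(y_2,z)$ from condition (1) and then squeezes with the same pair of triangle inequalities to force the equalities of condition (2), merely invoking symmetry ("without loss of generality") for the converse instead of explicitly routing both conditions through your pivot equation $(\star)$. Your handling of (3), noting that each of the two equivalent conditions supplies one of its clauses, also matches the paper.
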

\begin{proof}
  Without loss of generality, suppose~\ref{item:btw1}.
  Then
  \begin{align*}
    d(x,y_1) + d(y_1,y_2) + d(y_2,z) &= d(x,y_2)+d(y_2,z)\\
    &= d(x,z).
  \end{align*}
  Therefore, using the triangle inequality, we have
  \begin{align*}
    d(y_1,z) &\ge d(x,z) - d(x,y_1)\\
    &= d(y_1,y_2) + d(y_2,z)\\
    &\ge d(y_1,z).
  \end{align*}
  Hence both inequalities are equalities, i.e.~\ref{item:btw3} holds.
  Finally, it is evident that once both~\ref{item:btw1} and~\ref{item:btw3} hold then~\ref{item:btw2} does.
\end{proof}

\begin{defn}\label{defn:4cuts}
  A metric space \textbf{has no 4-cuts} if whenever $y_1\neq y_2$, condition~\ref{item:btw2} above implies~\ref{item:btw1} and~\ref{item:btw3}, or equivalently whenever $y_1\neq y_2$, if $d(x,y_1)+d(y_1,y_2)=d(x,y_2)$ and $d(y_1,y_2)+d(y_2,z)=d(y_1,z)$ then $d(x,z) = d(x,y_1)+d(y_1,y_2)+d(y_2,z)$.
\end{defn}

\begin{eg}
  Of course, $\R^n$ has no 4-cuts, and the property of having no 4-cuts is inherited by subspaces.
\end{eg}

\begin{eg}
  For an example of a metric space that does have 4-cuts, consider the 4-cycle graph
  \[
  \begin{tikzpicture}
    \node (x) at (0,0) {$x$};
    \node (y1) at (0,1) {$y_1$};
    \node (y2) at (1,1) {$y_2$};
    \node (z) at (1,0) {$z$};
    \draw (x) -- (y1) -- (y2) -- (z) -- (x);
  \end{tikzpicture}
  \]
  with the shortest path metric.
  More generally, any graph containing a 4-cycle as a full subgraph (a.k.a.\ induced subgraph: the graph determined by some four vertices and all the edges between them) has 4-cuts.

  Yuzhou Gu has pointed out that this implication is not reversible: there are graphs that do not contain a 4-cycle as a full subgraph yet nevertheless have 4-cuts, such as the following.
  \[
    \begin{tikzpicture}
      \node (a) at (.5,-.7) {$x$};
      \node (b) at (0,.4) {$y_1$};
      \node (c) at (1,1) {$y_2$};
      \node (d) at (2,.4) {$z$};
      \node (e) at (1.5,-.7) {$w$};
      \draw (a) -- (b) -- (c) -- (d) -- (e);
      \draw (a) -- (e); \draw (b) -- (e); \draw (c) -- (e);
    \end{tikzpicture}
  \]
  Here there is no full subgraph that is a 4-cycle, yet $d(x,y_1)+d(y_1,y_2)=2=d(x,y_2)$ and $d(y_1,y_2)+d(y_2,z)=2=d(y_1,z)$, while $d(x,z) = 2$ whereas $ d(x,y_1)+d(y_1,y_2)+d(y_2,z) = 3$.
  In~\cite{gu:grmaghom-algmorse} Gu proves that the graphs without 4-cuts are precisely the ``Ptolemaic'' graphs.
\end{eg}

\begin{eg}\label{eg:trees}
  A tree has no 4-cuts.
  To prove this, note that in a tree there is exactly one path between any two vertices that does not visit any vertex twice, and this is also the unique path of shortest length.
  Now if $\btw{x}{y_1}{y_2}$ and $\btw{y_1}{y_2}{z}$ with $y_1\neq y_2$, we claim that following the shortest path from $x$ to $y_1$ followed by the shortest path from $y_1$ to $y_2$ and then the shortest path from $y_2$ to $z$ gives the shortest path from $x$ to $z$, so that $d(x,z) = d(x,y_1)+d(y_1,y_2)+d(y_2,z)$.
  By the above observation, it suffices to show that this path does not duplicate any vertices.

  Since following the shortest path from $x$ to $y_1$ and then the shortest path from $y_1$ to $y_2$ does yield the shortest path from $x$ to $y_2$ (as $\btw{x}{y_1}{y_2}$), no vertices can be duplicated in this part of the path; and similarly no vertices can be duplicated in the part of the path from $y_1$ to $z$.
  So if any vertex were duplicated it would have to occur once strictly between $x$ and $y_1$ and again strictly between $y_2$ and $z$.
  Thus there is a path from this vertex to itself which visits $y_1$ and $y_2$ exactly once each, and since $y_1\neq y_2$ this path must contain a cycle, contradicting the assumption that the graph is a tree.

  (The magnitude homology of trees is calculated in~\cite[Corollary 31]{hw:mag-hom-gr}; it carries exactly the information of the number of vertices and edges.)
\end{eg}

\begin{eg}
  A complete graph also has no 4-cuts: since all nonzero distances are $1$, if $y_1\neq y_2$ then the hypotheses $\btw{x}{y_1}{y_2}$ and $\btw{y_1}{y_2}{z}$ imply $x=y_1$ and $y_2=z$, and the conclusion follows.
\end{eg}

The ``Menger-analogue'' of the uniqueness of geodesics is the following:

\begin{defn}\label{defn:geodetic}
  Two points $x,z\in X$ are \textbf{uniquely non-adjacent} if whenever $\btw x{y_1}z$ and $\btw{x}{y_2}{z}$, one of the following holds:
  \begin{itemize}
  \item $\btw x{y_1}{y_2}$ and $\btw{y_1}{y_2}z$.
  \item $\btw x{y_2}{y_1}$ and $\btw{y_2}{y_1}z$.
  \end{itemize}
  If any pair of distinct points is uniquely non-adjacent, we say that $X$
  is \textbf{geodetic}.
\end{defn}

Of course, $\R^n$ is geodetic, and geodeticity is inherited by subspaces.
The terminology is motivated by the following example:

\begin{eg}
  A connected graph with the shortest-path metric is geodetic in the above sense if and only if any two vertices are connected by a \emph{unique} shortest path (this is the usual meaning of ``geodetic'' in graph theory~\cite[p.104]{ore}).
  On one hand, if the latter holds, and $\btw x{y_1}z$ and $\btw{x}{y_2}{z}$, then $y_1$ and $y_2$ both lie on the unique shortest path from $x$ to $z$, hence their positions on that path can be compared.

  On the other hand, if $X$ is geodetic in the sense of \cref{defn:geodetic}, and $x$ and $z$ are connected by two shortest paths, let $y_1$ and $y_2$ be the first vertices after $x$ on the two paths.
  Then $\btw x{y_1}z$ and $\btw x{y_2}z$ (otherwise the paths would not be shortest); but $d(x,y_1)=d(x,y_2) = 1$, so $\btw x{y_1}{y_2}$ and $\btw x{y_2}{y_1}$ both imply $y_1=y_2$.
  By induction, the entire two shortest paths coincide.

  Every tree is geodetic, as is any cycle of odd length, any complete graph, and any block graph (one obtained by joining complete graphs together at vertices).
  But a cycle of even length is not: antipodal points thereon are not uniquely non-adjacent.
\end{eg}

\begin{thm}\label{thm:h2zero}
  Suppose that
  \begin{itemize}
  \item $X$ is geodetic; and
  \item either
    \begin{itemize}
    \item[--] $X$ is Menger convex and has no 4-cuts, or
    \item[--] $X$ is geodesic.
    \end{itemize}
  \end{itemize}
  Then $H_{2,*}(X)=0$.
\end{thm}
\begin{proof}
  Recall that the generating 2-chains in grading $\ell$ are triples $\chain{x_0,x_1,x_2}$ such that $x_0\neq x_1\neq x_2$ and $d(x_0,x_1)+d(x_1,x_2)=\ell$, and the boundary map $d_2$ takes such a triple to $-\chain{x_0,x_2}$ if $x_1$ is strictly between $x_0$ and $x_2$, and $0$ otherwise.
  Thus, the 2-cycles are finite linear combinations $\sum_{x,y,z} a_{xyz} \chain{x,y,z}$ such that for all $x,z$ we have
  \begin{equation}
    \sum_{\sbtw xyz} a_{xyz} = 0.\label{eq:h2-2cycles}
  \end{equation}
  We want to show that any such cycle is a boundary.
  The sum splits into two parts: those for which $y$ is between $x$ and $z$ and those for which it isn't; we will show that both are boundaries.

  To show that $\sum_{\sbtw xyz} a_{xyz} \chain{x,y,z}$ is a boundary, we use geodeticity.
  Because of~\eqref{eq:h2-2cycles}, it suffices to show that $\chain{x,y_1,z}- \chain{x,y_2,z}$ is a boundary whenever $y_1$ and $y_2$ are both between $x$ and $z$.
  By geodeticity, we have either
  \begin{align*}
    d(x,y_1)+d(y_1,y_2)+d(y_2,z) &=  d(x,z) \qquad \text{or}\\
    d(x,y_2)+d(y_2,y_1)+d(y_1,z) &=  d(x,z).
  \end{align*}
  In the first case, $d(\chain{x,y_1,y_2,z}) = \chain{x,y_1,z}- \chain{x,y_2,z}$, while in the second case $d(-\chain{x,y_2,y_1,z}) = \chain{x,y_1,z}- \chain{x,y_2,z}$.

  Now suppose $y$ is not between $x$ and $z$; here we use the second pair of assumptions.
  In the case when $X$ is Menger convex and has no 4-cuts, we can choose a $w$ with $\sbtw ywz$ by Menger convexity.
  If we had $\sbtw xyw$, then because $X$ has no 4-cuts we would have $\sbtw xyz$, a contradiction.
  Thus $y$ is not between $x$ and $w$, so $d_3(\chain{x,y,w,z}) = \chain{x,y,z}$ and hence $\chain{x,y,z}$ is a boundary.

  On the other hand, if instead $X$ is geodesic, let $a=d(y,z)$ and let $\gamma : [0,a] \to X$ be an isometry with $\gamma(0)=y$ and $\gamma(a)=z$.
  Suppose that $\btw xy{\gamma(t)}$ for all $t\in (0,a)$, i.e.\ that 
  \[ d(x,y) + d(y,\gamma(t)) = d(x,\gamma(t)) \]
  for all such $t$.
  Since $d(y,-)$ and $d(x,-)$ are continuous functions, and $\lim_{t\to a} \gamma(t) = z$, it follows that also
  \[ d(x,y) + d(y,z) = d(x,z), \]
  i.e.\ $\btw xyz$, a contradiction.
  Thus there exists some $t_0\in (0,a)$ such that $y$ is not between $x$ and $\gamma(t_0)$, whence $d_3(\chain{x,y,\gamma(t_0),z}) = \chain{x,y,z}$.
\end{proof}

\begin{cor}
  If $X$ is a closed convex subset of $\R^n$, then $H_{2,*}(X)=0$.\qed
\end{cor}

The presence of the two assumptions in \cref{thm:h2zero}, which are used in disjoint parts of the proof, suggests that there are two ways in which $H_{2,*}(X)$ can fail to vanish.
On the one hand, if $X$ is geodetic, then $H_{2,*}(X)$ detects some kind of
``failure of simultaneous convexity for triangles''.

\begin{thm}\label{thm:2cxvH2}
  If $X$ is geodetic and has no 4-cuts, then $H_{2,\ell}(X)$ is freely generated by the ordered triples $\chain{x,y,z}$ of distinct points such that $d(x,y)+d(y,z)=\ell$, $y$ is not between $x$ and $z$, $x$ and $y$ are adjacent, and $y$ and $z$ are adjacent.
\end{thm}
\begin{proof}
  The proof of \cref{thm:h2zero} shows that $\sum_{\sbtw xyz} a_{xyz} \chain{x,y,z}$ is a boundary, and that $\chain{x,y,z}$ is a boundary if $y$ is not between $x$ and $z$ and either $x$ and $y$ are non-adjacent or $y$ and $z$ are non-adjacent.
  Moreover, these boundaries generate the entire group of boundaries, since the boundary of a generating 3-chain $\chain{x,y,w,z}$ is either $0$, $\chain{x,y,z}$, $\chain{x,w,z}$, or $\chain{x,w,z}-\chain{x,y,z}$ according to whether $y$ is between $x$ and $w$ and whether $w$ is between $y$ and $z$.
  Thus, $H_{2,*}(X)$ is generated by what is left, which is what the theorem claims.
\end{proof}

\begin{eg}
  If $X$ is a closed annulus in the plane, then $H_{2,*}(X)$ is freely generated by the ordered triples $\chain{x,y,z}$ of distinct points all lying on the inner boundary of $X$.
\end{eg}

\begin{eg}
  If $X= X_1\sqcup X_2$ is the disjoint union of two convex sets in $\R^n$, then $H_{2,*}(X)$ is freely generated by the ordered triples $\chain{x,y,z}$ such that $x$ and $z$ lie in one component, $y$ lies in the other, and the segments $\overline{xy}$ and $\overline{yz}$ do not intersect $X$ except at their endpoints.
\end{eg}

On the other hand, and perhaps more interestingly, $H_{2,*}(X)$ can be
nonzero if $X$ is Menger convex but not geodetic.
In this case, $H_{2,*}(X)$ detects the ``failure of geodeticity'', which intuitively says something about whether pairs of points can be connected by multiple distinct geodesics.

\begin{eg}\label{thm:h2s1}
  Let $X=S^1$ with the geodesic metric (\emph{not} the subspace metric induced from $\R^2$), scaled so that the distance between two points is the angle between them.
  This is Menger convex, and indeed geodesic, so that $H_{1,*}(X)=0$.

  A point $y$ is between $x$ and $z$ exactly when it lies on the \emph{shorter} arc connecting $x$ and $z$.
  If $x$ and $z$ are antipodal, then every point $y$ is between $x$ and $z$.
  Moreover, of three distinct points $x,y,z$, either exactly one of them is between the other two, or none of them is between the other two.

  Since $X$ is geodesic, the second half of the proof of \cref{thm:h2zero} still applies.
  Thus it remains to consider the differences $\chain{x,y_1,z}-\chain{x,y_2,z}$ where $y_1$ and $y_2$ are strictly between $x$ and $z$.
  Moreover, although $X$ is not geodetic, it almost is: if $x$ and $z$ are not antipodal, then they \emph{are} uniquely non-adjacent.
  Thus, the proof of \cref{thm:h2zero} shows that $\chain{x,y_1,z}-\chain{x,y_2,z}$ is a boundary in this case.

  Moreover, if $x$ and $z$ \emph{are} antipodal, the same argument shows that $\chain{x,y_1,z}-\chain{x,y_2,z}$ is again a boundary if $y_1$ and $y_2$ lie in the same one of the two semicircles into which $x$ and $z$ disconnect $X$.
  Thus, what remain are the differences $\chain{x,y_1,z}-\chain{x,y_2,z}$ where $x$ and $z$ are antipodal, $y_1$ lies in one semicircle and $y_2$ lies in the other.
  The choice of $y_1$ and $y_2$ does not matter in homology (since changing them modifies the difference by a boundary), so we can consider each $\chain{x,y_1,z}-\chain{x,y_2,z}$ to be a single generator parametrized by the ordered pair of antipodal points $x,z$; or equivalently by a single point $x$, since $z$ is determined by $x$.
  (Switching $y_1$ and $y_2$ negates the generator, but we can make a consistent choice by, say, stipulating that the cyclic order $x \leadsto y_1 \leadsto z \leadsto y_2$ be counterclockwise.)
  Since antipodal points are always at distance $\pi$, we have
  \[ H_{2,\ell}(X) =
  \begin{cases}
    0 & \qquad \ell\neq\pi\\
    \Z[S^1] & \qquad \ell=\pi
  \end{cases}
  \]
  Intuitively, $H_{2,*}(X)$ is detecting the fact that antipodal points are connected by more than one distinct geodesic.
\end{eg}

For a completely general metric space, $H_{2,*}(X)$ can fail to vanish for a combination of these two reasons.
This is often the case for graphs with the shortest path metric, as studied in~\cite{hw:mag-hom-gr}: such spaces are never Menger convex, often have 4-cuts, and are often not geodetic.
In particular, the difference observed in ~\cite[\S A.1]{hw:mag-hom-gr} between the magnitude homology of odd and even cycle graphs should be partially explained by the fact that odd cycles are geodetic while even ones are not.

In all the examples above, the magnitude homology of a metric space is
torsion-free. But this is not the case in general, even for
graphs. Kaneta and Yoshinaga constructed a graph whose third homology has
torsion \cite[Corollary~5.12]{ky:maghom-ordcplx} (answering a question of
Hepworth and Willerton~\cite[\S1.2.2]{hw:mag-hom-gr}).  Sazdanovic and
Summers then showed that \emph{every} finitely generated abelian
group arises as a subgroup of the magnitude homology of some graph
\cite[Theorem~3.14]{ss:tor-mag-hom}.

\section{Magnitude homology of semicartesianly enriched categories}
\label{sec:hmag}

We now move on to define a more general notion of magnitude homology.
As we recalled in \cref{sec:mag}, magnitude can be defined for arbitrary $\V$-enriched categories, where $\V$ is a monoidal category equipped with a size.
Our current setting for magnitude homology is not as general; we require $\V$ to satisfy the following condition.

\begin{defn}
  A symmetric monoidal category $\V$ is \textbf{semicartesian} if its unit object $\done$ is the terminal object.
\end{defn}

\begin{egs}
  Of course, any cartesian monoidal category is semicartesian, such as $\mathrm{Set}$ or $\FinSet$.
  But $[0,\infty]$ (or $[0,\infty)$) is also semicartesian, since its unit object is $0$, even though it is not cartesian.
  The categorical cartesian product on $[0,\infty]$ is $\max$, so the monoidal category $[0,\infty]_{\max}$ from \cref{eg:size-ultra,eg:mag-ultra} is also semicartesian.
  We can also restrict to any full monoidal subcategory, such as $\mathbf{2} = \{0\le 1\}\subseteq \FinSet$ (whose enriched categories are preorders) or $\N_\infty = \N \cup \{\infty\} \subseteq [0,\infty]$ (whose enriched categories include graphs with the shortest-path metric).
\end{egs}

We also have to categorify the semiring $\k$ and the size function $\size :
\ob(\V) \to \k$.  We will replace $\k$ by a \emph{symmetric monoidal
  abelian category} $\A$, and $\size$ by a \emph{strong symmetric monoidal
  functor} $\Sigma : \V\to \A$.  That is, $\A$ is enriched over abelian
groups and has finite biproducts and well-behaved kernel-cokernel
factorizations, with a coherent tensor product $\otimes:\V\times \V\to\V$
that is additive in each argument, and $\Sigma$ is a functor with coherent
isomorphisms $\Sigma(V\otimes W) \cong \Sigma V \otimes \Sigma W$ and
$\Sigma\done\cong\done$. (See \cite[Chapters~VII and~VIII]{maclane:cwm} for
monoidal and abelian categories.)

\begin{eg}\label{eg:fab}
  When $\V=\FinSet$, we let $\A=\Ab$ be the category of abelian groups,
  with its usual tensor product, and $\Sigma(X)$ the free abelian group on $X$.
\end{eg}

\begin{eg}\label{eg:Rab}
  When $\V=[0,\infty)$, we let $\A=\prod_\R \Ab$ be the category of $\R$-graded abelian groups, with the convolution monoidal structure:
  \[ (A\otimes B)_\ell = \bigoplus_{j+k=\ell} A_j \otimes B_k \]
  and define $\Sigma: [0,\infty) \to \A$ by
  \[
    (\Sigma(\ell))_k =
    \begin{cases}
      \Z &\qquad k=\ell\\
      0 &\qquad \text{otherwise.}
    \end{cases}
  \]
  The action of $\Sigma$ on a morphism $\ell_1 > \ell_2$ is the only
  possibility, the zero map. 
\end{eg}

\begin{egs}\label{eg:subcat}
  Any such $\Sigma:\V\to\A$ can be restricted to any monoidal subcategory of $\V$.
  For instance, \cref{eg:fab} can be restricted to $\mathbf{2}\subseteq \FinSet$, and \cref{eg:Rab} can be restricted to $\N\subseteq [0,\infty)$.
  In the latter case, the restricted $\Sigma : \N \to \prod_\R \Ab$ lands in the monoidal subcategory $\prod_\Z \Ab$ (or even $\prod_\N \Ab$), so we can use that as the target.
\end{egs}

\begin{eg}\label{eg:maxab}
  When $\V=[0,\infty)_{\max}$, we can let $\A=\prod_\R \Ab$ as before, with the corresponding convolution monoidal structure:
  \[ (A\otimes B)_\ell = \bigoplus_{\max(j,k)=\ell} A_j \otimes B_k \]
  and the same functor $\Sigma$ as in \cref{eg:Rab}, which is also strong symmetric monoidal for these two different monoidal structures.
\end{eg}

Now we can give a general analogue of \cref{defn:mc}.

\begin{defn}\label{defn:gmc}
  Let $\V$ be a semicartesian symmetric monoidal category, $\A$ a closed symmetric monoidal abelian category, and $\Sigma :\V\to\A$ a strong symmetric monoidal functor.
  The \textbf{unnormalized magnitude complex} of a $\V$-category $X$ is a chain complex in $\A$ defined by
  \[ \uMCS_n(X) = \bigoplus_{x_0,\dots,x_n \in X} \Sigma X(x_0,x_1) \otimes \cdots \otimes \Sigma X(x_{n-1},x_n). \]
  Its boundary map $d_n : \uMCS_{n}(X) \to \uMCS_{n-1}(X)$ is an alternating sum
  \[ d_n = \sum_{i=0}^n (-1)^i d_n^i \]
  where $d_n^0$ and $d_n^n$ are induced, respectively, by discarding $x_0$ or $x_n$ from the indices and using the maps
  \[
    \Sigma X(x_0,x_1) \to \Sigma \done \cong \done \qquad\text{and}\qquad
    \Sigma X(x_{n-1},x_n) \to \Sigma \done \cong \done
  \]
  (which exist because $\done\in \V$ is terminal), while for $1\le i \le n-1$ the map $d^i_n$ is induced by discarding $x_i$ from the indices and using the composition map
  \[ \Sigma X(x_{i-1},x_i) \otimes \Sigma X(x_i,x_{i+1}) \cong
    \Sigma (X(x_{i-1},x_i) \otimes X(x_i,x_{i+1})) \to
    \Sigma X(x_{i-1},x_{i+1}). \]
  We leave it to the reader to check that $d\circ d = 0$.

  For each $0\le i\le n-1$ there is a degeneracy map $s_i : \uMCS_{n-1} \to \uMCS_n$ induced by duplicating $x_i$ in the indices and using the identities map
  \[ \done \cong \Sigma\done \to \Sigma X(x_i,x_i). \]
  The \textbf{(normalized) magnitude complex} is the induced chain complex $\MCS_*(X)$ where $\MCS_n(X)$ is the quotient of $\uMCS_n(X)$ by the images of all the degeneracies:
  \[ \MCS_n(X) = \uMCS_n(X) \Big/ \textstyle\bigcup_{i=0}^{n-1} \im(s_i) .\]
  The \textbf{magnitude homology} of $X$ is the homology of the chain complex $\MCS_*(X)$:
  \[ \hmag_n(X) =
    \ker\big(\MCS_{n}(X) \xrightarrow{d} \MCS_{n-1}(X)\big) \Big/
    \im\big(\MCS_{n+1}(X) \xrightarrow{d} \MCS_{n}(X)\big).
  \]
  Note that each $\hmag_n(X)$ is an object of $\A$.
\end{defn}

\begin{eg}
  When $\V=\FinSet$ and $\Sigma$ is the free abelian group functor, then $\uMCS_n(X)$ is the free abelian group generated the set of composable strings of $n$ morphisms in $X$, and $\MCS_n(X)$ is the free abelian group generated by the set of composable strings of $n$ \emph{nonidentity} morphisms in $X$.
  The pieces of the differential $d^i_n$ in $\uMCS_n(X)$ discard the first or last morphisms, or compose a pair of morphisms; while the differential in $\MCS_n(X)$ does the same except that if a composition yields an identity morphism then that part of the differential is defined to be $0$.
  In particular, $\MCS_*(X)$ is the normalized complex of simplicial chains in the nerve of $X$, so that $\hmag_*(X)$ is just the ordinary homology of (the geometric realization of) that nerve.

  Of course, since $\mathbf{2}\subseteq \FinSet$, this analysis applies also to $\mathbf{2}$-enriched categories, recovering the ordinary homology of the nerve of a preoder.
\end{eg}

\begin{eg}
  When $\V=[0,\infty)$ and $\Sigma$ is as in \cref{eg:Rab}, $\MCS_n(X)$ can be identified with the magnitude complex defined in \cref{defn:mc}.
  Specifically, $\uMCS_n(X)$ is the complex defined analogously to \cref{defn:mc} but without the restriction that each $x_i\neq x_{i+1}$, and the quotient by degeneracies kills all the generators in which some $x_i = x_{i+1}$.
  Hence $\hmag_n(X)$ is the magnitude homology defined in \cref{defn:hmag-met}:
  \[ \hmag_n(X) = \{ H_{n,\ell}(X) \}_\ell. \]
  Note that the grading $n$ is the homological one and the grading $\ell$ comes from the intrinsic $\R$-grading of the objects of $\A$.

  As observed in \cref{eg:subcat}, when $\V=\N$, we can take $\Sigma$ to land in $\prod_\N \Ab$.
  And as noted in \cref{sec:met}, when furthermore $X$ is a graph with the shortest path metric, $\MCS_n(X)$ coincides with the magnitude complex of~\cite{hw:mag-hom-gr}, so that $\hmag_*(X)$ is their magnitude homology of a graph.
\end{eg}

\begin{eg}\label{eg:ultra-maghom}
  When $\V=[0,\infty)_{\max}$ with $\Sigma$ as in \cref{eg:maxab}, we
    obtain an $\R$-graded \emph{ultramagnitude homology} theory for
    ultrametric spaces.
    (And also for quasi-pseudo-ultrametric spaces, but for simplicity we assume here symmetry and separatedness.)
  While leaving the detailed exploration of this theory for future research, we can make a few observations about it.

  The ultramagnitude chain group $\MC_{n,\ell}(X)$ is the free abelian group on tuples $\chain{x_0,\dots,x_n}$ where $\max(d(x_0,x_1),\dots,d(x_{n-1},x_n)) = \ell$ and each $x_i \neq x_{i+1}$.
  The boundary component $d_n^i$ discards the $i^{\mathrm{th}}$ point as long as this doesn't change the \emph{maximum} distance, which means in particular that compared to the ordinary magnitude homology of a metric space, $d^0_n$ and $d^n_n$ are no longer always zero.
  But we still have $d^0_1 = d^1_1 = 0$,
so $H_0(X)$ is again free on the points of $X$, and all 1-chains are cycles.

  The ultramagnitude homology $H_{1,\ell}(X)$ is thus again a quotient of the free abelian group generated by pairs $\chain{x,y}$ such that $d(x,y)=\ell$, but the relations imposed are different.
  (Assume $\ell>0$ for nontriviality.)
  Since in an ultrametric space all triangles are either acute isosceles (two equal sides and the third smaller) or equilateral, from a generating 2-chain $\chain{x,y,z}$ there are four possible kinds of relations we get in $H_{1,\ell}(X)$:
  \begin{enumerate}
  \item If $d(x,y)=d(y,z)=\ell$ but $d(x,z)<\ell$, then $\chain{x,y} + \chain{y,z} = 0$.\label{item:ur1}
  \item If $d(x,y)=d(x,z)=\ell$ but $d(y,z)<\ell$, then $\chain{x,y} - \chain{x,z} = 0$.\label{item:ur2}
  \item If $d(x,z)=d(y,z)=\ell$ but $d(x,y)<\ell$, then $\chain{y,z} - \chain{x,z} = 0$.\label{item:ur3}
  \item If $d(x,y)=d(y,z)=d(x,z)=\ell$, then $\chain{x,y} - \chain{x,z} + \chain{y,z} = 0$.\label{item:ur4}
  \end{enumerate}
  In particular, from~\ref{item:ur1} we get $\chain{x,y} = - \chain{y,x}$ (hence $\chain{x,x}=0$), and with this in hand~\ref{item:ur2} and~\ref{item:ur3} are interderivable.

  More concretely, $H_{1,\ell}(X)$ can be described as follows.\footnote{We thank the referee for this simplification of our original description.}
  Write $x\sim y$ if $d(x,y)\le \ell$, and $x\approx y$ if $d(x,y)< \ell$; in an ultrametric space these are equivalence relations, with $\approx$ refining $\sim$ (i.e.\ each $\sim$-equivalence class is a disjoint union of $\approx$-equivalence classes).
  Relations~\ref{item:ur2} and~\ref{item:ur3} say precisely that $\chain{x,y}$ depends only on the $\approx$-class of $x$ and $y$.
  Moreover, if $\mathbf{x}$ and $\mathbf{y}$ are $\approx$-classes, then there exist $x\in \mathbf{x}$ and $y\in \mathbf{y}$ such that $d(x,y)=\ell$ if and only if $d(x,y)=\ell$ for \emph{all} $x\in \mathbf{x}$ and $y\in \mathbf{y}$, and if and only if $\mathbf{x}$ and $\mathbf{y}$ are distinct and in the same $\sim$-class.

  Thus, we can equivalently consider $H_{1,\ell}(X)$ to be generated by pairs $\chain{\mathbf x,\mathbf y}$, where $\mathbf{x}$ and $\mathbf{y}$ are distinct $\approx$-classes in the same $\sim$-class, thereby automatically incorporating relations~\ref{item:ur2} and~\ref{item:ur3}.
  In terms of these generators,~\ref{item:ur1} says $\chain{\mathbf x,\mathbf y} = - \chain{\mathbf y,\mathbf x}$, while~\ref{item:ur4} says that $\chain{\mathbf x,\mathbf y} + \chain{\mathbf y,\mathbf z} = \chain{\mathbf x,\mathbf z}$ when $\mathbf{x},\mathbf{y},\mathbf{z}$ are pairwise distinct and in the same $\sim$-class.

  Even more simply, we can take the generators to be pairs $\chain{\mathbf x,\mathbf y}$ of arbitrary (not necessarily distinct) $\approx$-classes in the same $\sim$-class, modulo the single relation $\chain{\mathbf x,\mathbf y} + \chain{\mathbf y,\mathbf z} = \chain{\mathbf x,\mathbf z}$: this then implies $\chain{\mathbf x,\mathbf x}=0$ and $\chain{\mathbf x,\mathbf y} = - \chain{\mathbf y,\mathbf x}$.

  In fact $H_{1,\ell}(X)$ is a \emph{free} abelian group.
  There does not seem to be a canonical choice of basis, but if we choose one $\approx$-class in each $\sim$-class, then a basis is given by the generators $\chain{\mathbf x,\mathbf y}$ where $\mathbf{x}$ is the chosen $\approx$-class in its $\sim$-class and $\mathbf{y}$ is distinct from $\mathbf{x}$.
  The relations $\chain{\mathbf x,\mathbf y} = - \chain{\mathbf y,\mathbf x}$ and $\chain{\mathbf x,\mathbf y} + \chain{\mathbf y,\mathbf z} = \chain{\mathbf x,\mathbf z}$ then uniquely determine all the other generators as linear combinations of these basis elements.
  In particular, if $X$ is finite then the rank of $H_{1,\ell}(X)$ is $|X/\mathord\approx| - |X/\mathord\sim|$.
\end{eg}

\begin{rmk}
  A reader familiar with simplicial techniques may observe that $\MCS_*(X)$ is the normalized chain complex associated to a simplicial object in $\A$ by the Dold-Kan correspondence.
  This simplicial object is in fact a two-sided bar construction $B_*(\Sigma\done,\Sigma X,\Sigma\done)$ for the chain-complex-enriched category $\Sigma(X)$, which means that magnitude homology is in fact a particular kind of Hochschild homology.
  Standard results (e.g.~\cite[\S III.2]{goerss_jardine}) then imply that the magnitude homology can also be calculated using the unnormalized complex $\uMCS_*(X)$:
  \[ \hmag_n(X) \cong
    \ker\big(\uMCS_{n}(X) \xrightarrow{d} \uMCS_{n-1}(X)\big) \Big/
    \im\big(\uMCS_{n+1}(X) \xrightarrow{d} \uMCS_{n}(X)\big).
  \]
  This is related to the simplicial approach to magnitude homology of graphs described in~\cite[\S8]{hw:mag-hom-gr}.
  We will not need this level of abstraction, although it does suggest a potential generalization: we could allow $\Sigma$ to take values in arbitrary chain complexes, not necessarily concentrated in degree 0.
  The following invariance result can also be derived formally from simplicial tools, but we give an explicit proof.
\end{rmk}

\begin{thm}\label{thm:hmag-func}
  For any $\V$-functor $H:X\to X'$, there is an induced map
  \[ H_* : \hmag_*(X) \to \hmag_*(X'), \]
  which behaves functorially under composition.
  Moreover, if $K:X\to X'$ is another $\V$-functor and $\mu:H\to K$ a transformation, then $H_* = K_*$.
\end{thm}
\begin{proof}
  The first statement is straightforward. For the second, we begin by
  constructing a chain homotopy $m$ between the chain maps $\uMCS(X) \to
  \uMCS(X')$ induced by $H$ and $K$. For $x_0, \ldots, x_n \in X$ and $0
  \leq i \leq n$, there is a map
  \begin{multline*}
        X(x_0, x_1) \otimes\cdots\otimes X(x_{n - 1}, x_n) \\
    \longrightarrow
    X'(Px_0, Px_1) \otimes\cdots\otimes X'(Px_{i - 1}, PX_i)
    \otimes X'(Px_i, QX_i) \\
    \otimes
    X'(Qx_i, Qx_{i + 1}) \otimes\cdots\otimes X'(Qx_{n - 1}, Qx_n)
  \end{multline*}
  in $\V$ induced by the functorial actions of $P$ and $Q$ and by the
  $x_i$-component $\done \to X'(Px_i, Qx_i)$ of $\mu$. Applying $\Sigma$ to
  this map in $\V$ and summing over all $x_0, \ldots, x_n$ gives a map
  $m_n^i: \uMCS_n(X) \to \uMCS_{n + 1}(X)$ in $\A$. Put $m_n = \sum_{i =
    0}^n (-1)^i m_n^i$. A routine check shows that $m = \{m_n\}$ is a chain
  homotopy on unnormalized magnitude complexes, as claimed. Moreover, the
  maps $m_n$ preserve degeneracy of elements, and therefore also define a
  chain homotopy at the level of normalized complexes.  Hence, passing to
  homology, $H_* = K_*$.
\end{proof}

\begin{cor}\label{thm:hmag-adj-invar}
  If $X$ and $X'$ are $\V$-categories related by a $\V$-adjunction, then $\hmag_*(X) \cong \hmag_*(X')$.
  In particular, this is the case if $X$ and $X'$ are equivalent $\V$-categories.\qed
\end{cor}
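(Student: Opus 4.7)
The plan is to deduce this directly from \cref{thm:hmag-func}, exploiting the fact that magnitude homology is insensitive to the direction of $\V$-transformations: if there is \emph{any} $\V$-transformation $H \to K$ between two parallel $\V$-functors, then $H_* = K_*$.

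Concretely, suppose we have a $\V$-adjunction $F \dashv G$ with $F : X \to X'$ and $G : X' \to X$, unit $\eta : 1_X \to GF$ and counit $\varepsilon : FG \to 1_{X'}$. Applying \cref{thm:hmag-func} to $\eta$, we conclude that $(GF)_* = (1_X)_*$; and applying the functoriality part of the same theorem, $(GF)_* = G_* \circ F_*$ and $(1_X)_* = \mathrm{id}_{\hmag_*(X)}$. Hence $G_* \circ F_* = \mathrm{id}_{\hmag_*(X)}$. Dually, $\varepsilon$ gives $F_* \circ G_* = \mathrm{id}_{\hmag_*(X')}$. Therefore $F_*$ and $G_*$ are mutually inverse isomorphisms $\hmag_*(X) \cong \hmag_*(X')$.

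For the second assertion, any $\V$-equivalence can be upgraded to a $\V$-adjoint equivalence (the usual 2-out-of-3 argument carries over verbatim to any 2-category, and in particular to $\V\text{-}\mathrm{Cat}$), so the first statement applies. There is no real obstacle here; the only thing worth flagging is that \cref{thm:hmag-func} as stated requires the transformations $\eta, \varepsilon$ to exist as $\V$-transformations (not merely as ordinary natural transformations on underlying categories), which is built into the definition of a $\V$-adjunction, so nothing further is needed.
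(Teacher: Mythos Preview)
Your argument is correct and is exactly the intended one: the paper marks this corollary with \qed precisely because it follows immediately from \cref{thm:hmag-func} by applying the insensitivity to transformations to the unit and counit of the adjunction, just as you do. One minor simplification: for the equivalence case you don't even need to upgrade to an adjoint equivalence, since an equivalence already supplies $\V$-natural isomorphisms $1_X \cong GF$ and $FG \cong 1_{X'}$, which is all the argument requires.
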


\section{Ranks and Euler characteristics}
\label{sec:rk-chi}

Our main goal is to show that magnitude is the Euler characteristic of magnitude homology.
Since Euler characteristic is defined as an alternating sum of ranks, while our general notion of magnitude homology is parametrized over an arbitrary abelian category, we need an abstract notion of ``rank''.

\begin{defn}\label{defn:rank}
  Let $\A$ be an abelian category and $\k$ an abelian group.
  A \textbf{rank function} is a partial function $\rnk : \ob(\A) \pto \k$ such that $\rnk(0)=0$ and for any short exact sequence in $\A$:
  \[ 0 \to A \to B \to C \to 0 \]
  $\rnk(B)$ is defined if and only if both $\rnk(A)$ and $\rnk(C)$ are defined, and in this case
  \[ \rnk (B) = \rnk (A) + \rnk (C). \]
  If $\k$ is ordered, then a rank function is \textbf{positive} if $\rnk(A) \ge 0$ for all $A$.
  If $\A$ is symmetric monoidal and $\k$ is a ring, then a rank function is \textbf{multiplicative} if $\rnk (\done) = 1$, and whenever $\rnk (A)$ and $\rnk (B)$ are defined, so is $\rnk(A\otimes B)$ and $\rnk(A\otimes B) = \rnk (A) \cdot \rnk (B)$.
  We say an object $A\in\A$ is \textbf{finite} if $\rnk(A)$ is defined.
\end{defn}

In particular, rank is additive on binary direct sums: $\rnk (A\oplus B) = \rnk (A) + \rnk (B)$.
The following is obvious:

\begin{lem}\label{thm:chi-size}
  If $\rnk : \ob(\A) \pto \k$ is a multiplicative rank function and $\Sigma:\V\to\A$ is a strong symmetric monoidal functor taking values in finite objects, then the composite $\rnk\circ\Sigma:\ob(\V)\to\k$ is a size (\cref{defn:size}).\qed
\end{lem}

\begin{eg}
  If $\A=\Ab$, the usual $\Z$-valued rank of an abelian group (defined, for instance, as the dimension of the $\Q$-vector space $A \otimes \Q$) is a multiplicative rank function.
  When composed with the free abelian group functor $\Sigma:\FinSet\to \Ab$, this yields the cardinality $\size : \FinSet\to \Z$ as in \cref{eg:size-card}.
\end{eg}

\begin{eg}
  For any small set of objects $\mathbf{B}$ in an abelian category $\A$ that is closed under subobjects, quotients, and extensions, there
  is a universal rank function on $\A$ with domain $\mathbf{B}$. Its target
  abelian group is the K-theory group $K_0(\mathbf{B})$ of $\mathbf{B}$
  considered as a Quillen exact category.

  Note if $\mathbf{B}$ is closed under countable direct sums, the ``Eilenberg swindle'' implies that $K_0(\mathbf{B})$ is trivial: for any $A\in \mathbf{B}$, we have $\bigoplus_\omega A \cong A \oplus \bigoplus_\omega A$, hence $\rnk(A) = 0$.
  Thus some ``finiteness'' criterion is necessary to have a nontrivial rank function.
\end{eg}

\begin{eg}
  Let $\A=\prod_\R \Ab$ be the category of $\R$-graded abelian groups, with
  the convolution monoidal structure from \cref{eg:Rab}, and $\k =
  \Q\ser{q^\R}$ the ring of Novikov series
  (Definition~\ref{defn:novikov-series}).  Define an $\R$-graded abelian
  group $A$ to be \textbf{Novikov finite} if each abelian group $A_\ell$
  has finite rank and these ranks are left-finite.  In this case we define
  the \textbf{Novikov rank} of $A$ to be
  \[\rnk (A) = \sum_\ell (\rnk (A_\ell)) q^\ell \quad\in \Q\ser{q^\R}. \]
  This is a multiplicative rank function.
  When composed with the functor $\Sigma:[0,\infty) \to \A$ from
    \cref{eg:Rab}, which takes values in Novikov finite objects, the
    induced size $\size : [0,\infty) \to \Q\ser{q^\R}$ is the precisely the
      universal size from \cref{eg:genrat} (composed with the embedding
      $\Q(q^\R) \hookrightarrow \Q\ser{q^\R}$). (Recall the notion of
      universal size from Example~\ref{eg:genpoly}.)
  Of course, this rank function can be restricted to $\prod_\Z\Ab$, which contains the image of $\N \subseteq [0,\infty)$.
\end{eg}

\begin{eg}\label{eg:rk-ultra}
  If $\A=\prod_\R \Ab$ has the alternative convolution monoidal structure
  from \cref{eg:maxab} corresponding to $[0,\infty)_{\max}$, we can let
    $\k=\Z$, fix a $k\in [0,\infty)$, and put
  \[ \rnk_{\le k}(A) = \sum_{\ell \le k} \rnk(A_\ell) \]
  if this sum is finite.
  This is a multiplicative rank function, whose induced size $\size_{\le k} = \rnk_{\le k} \circ \Sigma$ is the so-named one from \cref{eg:size-ultra}.
  Similarly, we have a $\rnk_{<k}$ inducing the size $\size_{<k}$.
\end{eg}

The Euler characteristic of an $\N$-graded (or $\Z$-graded) object $\{A_n\}$ (such as a chain complex or the homology of a chain complex) should be the alternating sum of ranks.
In the classical case, this is only defined when the sum is finite, but to deal with the Novikov case we incorporate a topology.

\begin{defn}\label{defn:euler-char}
  Let $\A$ be an abelian category with a rank function $\rnk : \ob(\A) \pto \k$, where $\k$ is a topological ring.
  The \textbf{Euler characteristic} of a graded object $\{A_n\}_{n\in \N}$ is the infinite sum
  \begin{equation}
    \chi(A) = \sum_n (-1)^n \,\rnk (A_n)\label{eq:chi}
  \end{equation}
  if that converges in the topology of $\k$ (otherwise it is undefined).
  If $\k$ is an ordered topological ring, we say that $\chi(A)$
  \textbf{converges absolutely} if the above sum converges absolutely,
  i.e.\ the sum $\sum_n |\rnk(A_n) |$
  converges.
\end{defn}

As we now show, absolute convergence is automatic in the two cases of most interest to us.

\begin{lem}
  Let $\k$ be a ring with the discrete topology and $\sum_n a_n$ an infinite series in $\k$.
  The following are equivalent:
  \begin{enumerate}
  \item $\sum_n a_n$ converges.
  \item $a_n=0$ for all but finitely many $n$.
  \item (If $\k$ is ordered) $\sum_n a_n$ converges absolutely.\qed
  \end{enumerate}
\end{lem}

\begin{eg}
  When $\A=\Ab$ with the usual rank function, where $\Z$ has the discrete topology, a graded object has an Euler characteristic (which is then absolutely convergent) if and only if it has nonzero rank in only finitely many degrees, in which case we recover the usual Euler characteristic of a chain complex or its homology.
\end{eg}

\begin{eg}
  For $\A=\prod_\R \Ab$ with the rank function $\rnk_{\le k}$ from \cref{eg:rk-ultra}, a graded object has an (absolutely convergent) Euler characteristic if and only if there is a bound $N\in\N$ such that whenever $n>N$ and $\ell \le k$ we have $A_{n,\ell} = 0$.
\end{eg}

Recall from \cref{thm:novikov-properties} that the Novikov series field $\Q\ser{q^\R}$ is a complete valued field under the metric $d(a,b) = e^{-\nu(a-b)}$, where $\nu(a)$ is the smallest exponent with nonzero coefficient in $a$.

\begin{lem}\label{thm:novser}
  Let $\sum_n (\sum_\ell a_{n,\ell} q^\ell)$ be an infinite series whose terms lie in $\Q\ser{q^\R}$.
  The following are equivalent:
  \begin{enumerate}
  \item $\sum_n (\sum_\ell a_{n,\ell} q^\ell)$ converges in $\Q\ser{q^\R}$.\label{item:ns1}
  \item For any $k\in\R$ there exists an $N\in \N$ such that $a_{n,\ell} = 0$ for all $n>N$ and $\ell\le k$.\label{item:ns2}
  \item $\sum_n (\sum_\ell a_{n,\ell} q^\ell)$ converges absolutely in $\Q\ser{q^\R}$.\label{item:ns3}
  \end{enumerate}
  In this case, the sum of the series is
  \begin{equation}
    \sum_n \Big(\sum_\ell a_{n,\ell} q^\ell\Big) =
    \sum_\ell \Big( \sum_n a_{n,\ell}\Big) q^\ell\label{eq:novsum}
  \end{equation}
  with each inner sum $\sum_n a_{n,\ell}$ being finite by~\ref{item:ns2}.
\end{lem}
\begin{proof}
  A neighborhood basis of zero in $\Q\ser{q^\R}$ consists of, for each $k\in\R$, the set of all Novikov series $\sum_\ell b_\ell q^\ell$ such that $b_\ell =0$ for all $\ell<k$.
  From this~\ref{item:ns1}$\Leftrightarrow$\ref{item:ns2} follows directly, along with~\eqref{eq:novsum}.
  For~\ref{item:ns3}, note that if condition~\ref{item:ns2} is satisfied by a series then it is also satisfied by its termwise absolute value.
\end{proof}

In~\eqref{eq:novsum}, the $\sum_\ell$'s are formal sums (our notation for elements of $\Q\ser{q^\R}$), while the $\sum_n$'s denote actual (infinite) summations.

\begin{rmk}
  Note that the right-hand side of~\eqref{eq:novsum} exists as soon as each inner sum $\sum_{n} a_{n,\ell}$ is finite, which means that for any $\ell\in\R$ there exists an $N\in\N$ such that $a_{n,\ell}=0$ for all $n>N$.
  This is a strictly weaker condition than \cref{thm:novser}\ref{item:ns2}; consider for instance the series defined by
  \[a_{n,\ell}=
    \begin{cases}
      1 &\qquad \text{if } \ell = \frac1{n+1}\\
      0 &\qquad \text{otherwise}.
    \end{cases}
  \]
  Hence, existence of the right-hand side of~\eqref{eq:novsum} is not sufficient for convergence of the left-hand side.
\end{rmk}

\begin{eg}
  Let $\A=\prod_\R \Ab$ with the Novikov rank function, and $A$ a graded object of $\A$ such that each $A_n$ is Novikov finite.
  Then if $\chi(A)$ exists, it is absolutely convergent, and equals
  \begin{equation}
    \chi(A) = \sum_\ell \Big( \sum_n (-1)^n \,\rnk(A_{n,\ell})\Big) q^\ell.\label{eq:novikov-chi}
  \end{equation}
  with each inner sum $\sum_n (-1)^n \,\rnk(A_{n,\ell})$ being finite.
\end{eg}

In the discrete situation, it is a standard fact that the Euler characteristic of a chain complex can equivalently be computed from its homology.
In the topological case, this is still true as long as we have absolute convergence:

\begin{thm}\label{thm:chi-hom}
  Let $\A$ be an abelian category and $\rnk : \ob(\A)\pto \k$ a positive rank function, where $\k$ is an ordered topological ring, and let $A_*\in\Ch_\A$ be a chain complex in $\A$ and $H_*(A)$ its homology.
  If $\chi(A_*)$ converges absolutely, then so does $\chi(H_*(A))$, and $\chi(A_*) = \chi(H_*(A))$.
\end{thm}
\begin{proof}
  The assumption requires in particular that each $A_n$ is finite.
  Then we have short exact sequences
  \[ 0 \to \ker(d_{n}) \to A_n \to \im(d_n) \to 0 \]
  so that $\ker(d_n)$ and $\im(d_n)$ are finite and $\rnk(A_n) = \rnk(\ker(d_n)) + \rnk(\im(d_n))$.
  Since ranks are positive, $\rnk(\ker(d_n)) \le \rnk(A_n)$ and $\rnk(\im(d_n)) \le \rnk(A_n)$, hence $\sum_n (-1)^n \,\rnk(\ker(d_n))$ and $\sum_n (-1)^n \,\rnk (\im(d_n))$ also converge absolutely.

  Similarly, we have short exact sequences
  \[ 0 \to \im(d_{n+1}) \to \ker(d_n) \to H_n(A) \to 0 \]
  so that $H_n(A)$ is finite and $\rnk(H_n(A)) = \rnk(\ker(d_n)) - \rnk(\im(d_{n+1}))$.
  Therefore, since absolute convergence allows us to freely rearrange infinite sums, we have
  \begin{align*}
    \chi(A_*)
    &= \sum_n (-1)^n \, \rnk( A_n) \\
    &= \sum_n (-1)^n ( \rnk( \ker(d_n)) + \rnk(\im(d_n)))\\
    &= \sum_n (-1)^n \,\rnk( \ker(d_n)) + \sum_n (-1)^n \, \rnk(\im(d_n))\\
    &= \sum_n (-1)^n \,\rnk( \ker(d_n)) - \sum_n (-1)^n \, \rnk(\im(d_{n+1}))\\
    &= \sum_n (-1)^n ( \rnk( \ker(d_n)) - \rnk(\im(d_{n+1})))\\
    &= \sum_n (-1)^n\, \rnk( H_n(A)) \\
    &= \chi(H_*(A)).
  \end{align*}
\end{proof}

Therefore, as long as the Euler characteristic of the magnitude complex converges absolutely, it is equal to the Euler characteristic of the magnitude homology.
Thus, our strategy to relate magnitude homology to magnitude will be to show that the magnitude is equal to the Euler characteristic of the magnitude \emph{complex}.
We will proceed to this in the next section, but first we state a homotopical condition that we will need.

As is well-known, tensor products in abelian categories are not usually left exact, i.e.\ they do not preserve monomorphisms.
However, in our examples the values of the functor $\Sigma$ are usually projective (even free), and tensor products with projective objects are usually left exact (i.e.\ projective objects are flat).
The condition we need is just a ``relative'' version of this flatness.

\begin{defn}\label{defn:qmon}
  Let $\A$ be a closed symmetric monoidal abelian category.
  A \textbf{cofibration} in $\A$ is a monomorphism whose cokernel is projective.
  We say $\A$ is \textbf{Quillen monoidal} if whenever $A\to B$ and $C\to
  D$ are cofibrations, so is the induced map from the pushout (called a
  \emph{pushout product}):
\setlength{\mathsurround}{0pt} 
  \[
    \begin{tikzcd}
      A\otimes C \ar[r] \ar[d] & B\otimes C \ar[d] \ar[ddr] \\
      A\otimes D \ar[r] \ar[drr] & \bullet \ar[ul,phantom,near start,"\ulcorner"] \ar[dr,dashed] \\
      & & B\otimes D
    \end{tikzcd}
  \]
\setlength{\mathsurround}{0.8pt} 
and moreover the unit object $\done$ is projective.
\end{defn}

Readers familiar with model category theory will recognize this as the ``shadow'' in $\A$ of an assertion that the projective Quillen model structure on $\Ch_\A$ is monoidal.

Note that an object $A$ is projective if and only if it is \textbf{cofibrant}, i.e.\ the map $0\to A$ is a cofibration.
Thus, if $\A$ is Quillen monoidal, then projective objects are closed under tensor products, and tensoring with a projective object preserves cofibrations.

Let $\V$ be a symmetric monoidal category and $\Sigma: \V \to \A$ a strong
symmetric monoidal functor. Then any $\V$-category $X$ gives rise to an
$\A$-category $\Sigma X$, with the same objects as $X$ and hom-objects
$(\Sigma X)(x, x') = \Sigma(X(x, x'))$.

\begin{defn}
  We say that $\Sigma X$ is \textbf{cofibrant} if each object $\Sigma X(x,x')$ is projective and each identities map $\done \to \Sigma X(x,x')$ is a cofibration.
\end{defn}

\begin{eg}
  The category $\Ab$ of abelian groups is Quillen monoidal. Take the
  $\Sigma$ of \cref{eg:fab} and an ordinary category $X$.
  Since free abelian groups are projective, $\Sigma X$ is always cofibrant.
\end{eg}

\begin{eg}
  The category $\prod_\R \Ab$ of $\R$-graded abelian groups is also Quillen monoidal.
  To see this, note that its limits, colimits, and cofibrations are pointwise, so we must prove that if $A\to B$ and $C\to D$ are cofibrations then so is each induced map
  \setlength{\mathsurround}{0pt} 
  \[
    \begin{tikzcd}
      \bigoplus_{j+k=\ell} A_j\otimes C_k \ar[r] \ar[d] & \bigoplus_{j+k=\ell} B_j\otimes C_k \ar[d] \ar[ddr] \\
      \bigoplus_{j+k=\ell} A_j\otimes D_k \ar[r] \ar[drr] & \bullet \ar[ul,phantom,near start,"\ulcorner"] \ar[dr,dashed] \\
      & & \bigoplus_{j+k=\ell} B_j\otimes D_k.
    \end{tikzcd}
  \]
  \setlength{\mathsurround}{0.8pt} 
  But since direct sums preserve pushouts, this follows from the
  corresponding fact for $\Ab$ applied to the maps $A_j\to B_j$ and $C_k\to
  D_k$. Take the $\Sigma$ of \cref{eg:Rab} and a metric space $X$.
  Again, since degreewise free objects are projective, $\Sigma X$ is always cofibrant.
\end{eg}

\section{Magnitude homology categorifies magnitude}
\label{sec:categ}

We now prove, under suitable assumptions, that magnitude is the Euler characteristic of magnitude homology.
For all of this section, we place ourselves in the following context.

\begin{assume}\label{assumption}
  Let $\V$ be a semicartesian symmetric monoidal category, $\A$ a closed symmetric Quillen monoidal abelian category, $\Sigma :\V\to\A$ a strong symmetric monoidal functor, $\k$ an ordered topological ring, $\rnk : \ob(\A) \pto \k$ a positive multiplicative rank function such that $\Sigma$ takes finite values, $\size = \rnk \circ \Sigma$ the induced size function, and $X$ a $\V$-category with finitely many objects such that $\Sigma X$ is cofibrant.
\end{assume}

Recall from \cref{defn:gmc} that the magnitude chain complex is defined by
\[ \MCS_n(x) = \uMCS_n(X) \Big/ \textstyle\bigcup_{i=0}^{n-1} \im(s_i) .\]
where $\uMCS_*(X)$ is the unnormalized magnitude complex and $s_i : \uMCS_{n-1}(X) \to \uMCS_n(X)$ are the degeneracy maps.
The rank of $\uMCS_n(X)$ is easy to compute:

\begin{lem}
  We have
  \[ \rnk\,\uMCS_n(X) = \sum_{x_0,\dots,x_n \in X} (\size X(x_0,x_1)) \cdots (\size X(x_{n-1},x_n)). \]
\end{lem}
\begin{proof}
  This follows immediately from the definition of $\uMCS$ in \cref{defn:gmc}, the facts that $\rnk$ is additive on $\oplus$ and multiplicative on $\otimes$, and the definition of $\size$.
\end{proof}

\begin{eg}
  If $\V=\FinSet$, then $\rnk\,\uMCS_n(X)$ is the number of composable strings of $n$ morphisms in $X$.
\end{eg}

The problem now is to deal with the quotient in the definition of $\MCS_n(X)$.
For objects $x_0,\dots,x_n$ of $X$ we define
\[ \uMCS_n(x_0,\dots,x_n) = \Sigma X(x_0,x_1) \otimes \cdots \otimes \Sigma X(x_{n-1},x_n). \]
Thus, according to \cref{defn:gmc}, we have
\[ \uMCS_n(X) = \bigoplus_{x_0,\dots,x_n} \uMCS_n(x_0,\dots,x_n). \]
The degeneracy map $s_i : \uMCS_{n-1}(X) \to \uMCS_{n}(X)$ sends each $\uMCS_{n-1}(x_0,\dots,x_{n-1})$ into $\uMCS_n(x_0,\dots,x_i,x_i,\dots,x_{n-1})$ via the identities map $\done\cong \Sigma\done \to \Sigma X(x_i,x_i)$.
Thus, if we define
\[ \uMDS_n(x_0,\dots,x_n) = \bigoplus_{i \text{ such that}\atop x_i = x_{i+1}} \uMCS_{n-1}(x_0,\dots,x_i,x_{i+2},\dots,x_n) \]
then the degeneracy maps whose image lands in $\uMCS_n(x_0,\dots,x_n)$ assemble into a single map
\begin{equation*}
  \stil_n : \uMDS_n(x_0,\dots,x_n) \to \uMCS_n(x_0,\dots,x_n).
\end{equation*}
In particular, note that if $x_i\neq x_{i+1}$ for all $i$, then $\uMDS_n(x_0,\dots,x_n) = 0$.
If we denote the cokernel of $\stil_n$ by $\MCS_n(x_0,\dots,x_n)$, we have
\[ \MCS_n(X) = \bigoplus_{x_0,\dots,x_n} \MCS_n(x_0,\dots,x_n). \]
Thus, it will suffice to compute the ranks of the objects $\MCS_n(x_0,\dots,x_n)$.

Note that we have an exact sequence
\[ \uMDS_n(x_0,\dots,x_n) \to \uMCS_n(x_0,\dots,x_n) \to \MCS_n(x_0,\dots,x_n) \to 0 \]
but the left-hand map is not injective.
Our strategy will be to build up the image of this map in an explicit way such that we can compute its rank.
(The reader familiar with simplicial techniques will recognize this as an
inductive construction of the ``latching object'' of a Reedy cofibrant simplicial diagram, see e.g.~\cite[Chapter 5]{hovey:modelcats} or~\cite[Chapter 15]{hirschhorn:modelcats}.)
We define, inductively, a family of objects $\MDS_n(x_0,\dots,x_n)$ with maps
\[ j_n : \MDS_n(x_0,\dots,x_n) \to \uMCS_n(x_0,\dots,x_n)\]
as follows.
When $n=0$, we set $\MDS_0(x_0) = 0$, the zero object, and $j_0$ the unique morphism $0\to \done$.  If $n>0$, we split into cases:
\begin{enumerate}
\item If $x_0 \neq x_1$, define
  \[ \MDS_n(x_0,\dots,x_n) = \Sigma X(x_0,x_1) \otimes \MDS_{n-1}(x_1,\dots,x_n). \]
  The map $j_n$ is
  \begin{multline}
    \Sigma X(x_0,x_1) \otimes \MDS_{n-1}(x_1,\dots,x_n)
    \\\xrightarrow{1 \otimes j_{n-1}} \Sigma X(x_0,x_1) \otimes \uMCS_{n-1}(x_1,\dots,x_n)
    = \uMCS_n(x_0,\dots,x_n).\label{eq:jn}
  \end{multline}
\item If $x_0=x_1$, let $\MDS_n(x_0,\dots,x_n)$ be the pushout
  \setlength{\mathsurround}{0pt} 
  \begin{equation}
    \begin{tikzcd}
      \MDS_{n-1}(x_1,\dots,x_n) \ar[r] \ar[d,"j_{n-1}"] &
      \Sigma X(x_1,x_1) \otimes \MDS_{n-1}(x_1,\dots,x_n) \ar[d] \\
      \uMCS_{n-1}(x_1,\dots,x_n) \ar[r] &
      \MDS_n(x_0,\dots,x_n) \ar[ul,phantom,near start,"\ulcorner"]
    \end{tikzcd}\label{eq:jnp}
  \end{equation}
  \setlength{\mathsurround}{0.8pt} 
  where the top map inserts identities in $\Sigma X(x_1,x_1)$.
  The map $j_n$ is induced by the universal property of the pushout from the maps~\eqref{eq:jn} and the insertion of identities
  \begin{equation*}
    \uMCS_{n-1}(x_1,\dots,x_n)
    \to \Sigma X(x_1,x_1) \otimes \uMCS_{n-1}(x_1,\dots,x_n)
    = \uMCS_n(x_0,\dots,x_n).
  \end{equation*}  
\end{enumerate}

\begin{lem}\label{thm:jnisacof}
  Each map $j_n$ is a cofibration, and in particular a monomorphism.
\end{lem}
\begin{proof}
  By induction, using the fact that $\A$ is Quillen monoidal.
  In the base case, $0\to \done$ is a cofibration because $\done$ is projective.
  And in both cases of the inductive step, $j_n$ is a pushout product of the cofibration $j_{n-1}$ with another cofibration, namely $0 \to \Sigma X(x_0,x_1)$ or $\done \to \Sigma X(x_1,x_1)$; thus it is also a cofibration.
\end{proof}

\begin{lem}\label{thm:jnimg}
  $j_n$ is isomorphic to the image of $\stil_n$.
\end{lem}
\begin{proof}
  We first note that $\uMDS_n(x_0,\dots,x_n)$ also has an inductive description.
  Namely, $\uMDS_0(x_0) = 0$, while for $n>0$ if $x_0\neq x_1$ then
  \[ \uMDS_n(x_0,\dots,x_n) = \Sigma X(x_0,x_1) \otimes \uMDS_{n-1}(x_1,\dots,x_n) \]
  while if $x_0=x_1$ then
  \[ \uMDS_n(x_0,\dots,x_n) = \big(\Sigma X(x_0,x_1) \otimes \uMDS_{n-1}(x_1,\dots,x_n)\big) \oplus \uMCS_{n-1}(x_1,\dots,x_n). \]

  We define inductively a surjection $p_n : \uMDS_n(x_0,\dots,x_n) \to \MDS_n(x_0,\dots,x_n)$ such that $j_n\circ p_n = \stil_n$; since image factorizations are unique up to isomorphism in an abelian category this will prove the result.
  When $n=0$, $p_0$ is just the identity $0\to 0$.
  When $n>0$, we have by induction a surjection $p_{n-1} : \uMDS_{n-1}(x_1,\dots,x_n) \to \MDS_{n-1}(x_1,\dots,x_n)$.
  In the case when $x_0\neq x_1$, we define $p_n$ to be
  \[ \Sigma X(x_0,x_1) \otimes \uMDS_{n-1}(x_1,\dots,x_n) \xrightarrow{1\otimes p_{n-1}} \Sigma X(x_0,x_1) \otimes \MDS_{n-1}(x_1,\dots,x_n). \]
  This is a surjection since $p_{n-1}$ is and since $\otimes$ preserves surjections in each variable (since $\A$ is closed monoidal), and satisfies $j_n\circ p_n = \stil_n$.
  In the other case when $x_0=x_1$, we define $p_n$ to be the composite
  \begin{multline*}
    \big(\Sigma X(x_0,x_1) \otimes \uMDS_{n-1}(x_1,\dots,x_n)\big) \oplus \uMCS_{n-1}(x_1,\dots,x_n) \\
    \xrightarrow{(1\otimes p_{n-1}) \oplus 1}
    \big(\Sigma X(x_0,x_1) \otimes \MDS_{n-1}(x_1,\dots,x_n)\big) \oplus \uMCS_{n-1}(x_1,\dots,x_n) \\
    \longrightarrow \MDS_{n}(x_0,\dots,x_n)
  \end{multline*}
  where the second map is the copairing of the two coprojections into the pushout~\eqref{eq:jnp}.
  This is a surjection for any pushout, while the first factor is a surjection as before because $p_{n-1}$ is.
  And again it is straightforward to check that $j_n\circ p_n = \stil_n$.
\end{proof}

\begin{lem}\label{thm:jnrk}
  Each object $\MDS_n(x_0,\dots,x_n)$ is finite, and its rank can be computed inductively as follows.
  \begin{enumerate}
  \item If $x_0\neq x_1$, then $\rnk\, \MDS_n(x_0,\dots,x_n)$ is
    \[  \size X(x_0,x_1) \cdot \rnk\, \MDS_{n-1}(x_1,\dots,x_n) \]
  \item If $x_0=x_1$, then $\rnk\, \MDS_n(x_0,\dots,x_n)$ is
    \[ 
      (\size X(x_0,x_1) - 1) \cdot \rnk\, \MDS_{n-1}(x_1,\dots,x_n) + \rnk\, \uMCS_{n-1}(x_1,\dots,x_n)
    \]
  \end{enumerate}
\end{lem}
\begin{proof}
  When $x_0\neq x_1$, this is clear from the definition and the fact that $\rnk$ is multiplicative.
  When $x_0=x_1$, we reformulate the pushout~\eqref{eq:jnp} as a short exact sequence:
  \begin{multline*}
    0\to \MDS_{n-1}(x_1,\dots,x_n) \to \uMCS_{n-1}(x_1,\dots,x_n) \oplus (\Sigma X(x_1,x_1) \otimes \MDS_{n-1}(x_1,\dots,x_n))
    \\ \to \MDS_n(x_0,\dots,x_n) \to 0.
  \end{multline*}
  Here exactness at $\MDS_{n-1}(x_1,\dots,x_n)$, i.e.\ injectivity of the map with this domain, follows from injectivity of $j_n$, which follows from \cref{thm:jnisacof}.
  Now the claim follows from additivity of $\rnk$ on short exact sequences.
\end{proof}

\begin{thm}
  Each object $\MCS_n(X)$ is finite, and
  \[ \rnk\,\MCS_n(X) = \sum_{x_0,\dots,x_n} \prod_{0\le i \le n-1} \Big(\size X(x_i,x_{i+1}) - \delta_{x_i,x_{i+1}}\Big) \]
  where the $\delta$'s are Kronecker's, $\delta_{x,x'} = 1$ if $x=x'$ and $0$ otherwise.
\end{thm}
\begin{proof}
  Of course, it suffices to show that
  \begin{equation}
    \rnk\,\MCS_n(x_0,\dots,x_n) = \prod_{0\le i \le n-1} \Big(\size X(x_i,x_{i+1}) - \delta_{x_i,x_{i+1}}\Big).\label{eq:rkmcs}
  \end{equation}
  We can re-express~\eqref{eq:rkmcs} in a more inductive manner as
  \begin{align*}
    \rnk\,\MCS_0(x_0) &= 1\\
    \rnk\,\MCS_n(x_0,\dots,x_n) &= (\size X(x_0,x_1) - \delta_{x_0,x_1}) \cdot \rnk\, \MCS_{n-1}(x_1,\dots,x_n).
  \end{align*}
  The first equation holds since $\MCS_n(x_0) = \done$, whose rank is $1$ by assumption.
  For the second, note that by \cref{thm:jnimg}, we have short exact sequences
  \[ 0\to \MDS_n(x_0,\dots,x_n) \to \uMCS_n(x_0,\dots,x_n) \to \MCS_n(x_0,\dots,x_n) \to 0, \]
  so $\rnk\,\MCS_n(x_0,\dots,x_n) = \rnk\,\uMCS_n(x_0,\dots,x_n) - \rnk\, \MDS_n(x_0,\dots,x_n)$.
  Now we divide into cases based on whether $x_0=x_1$, using \cref{thm:jnrk}.
  If $x_0\neq x_1$, then
  \begin{align*}
    \MoveEqLeft \rnk\,\MCS_n(x_0,\dots,x_n)\\
    &= \rnk\,\uMCS_n(x_0,\dots,x_n) - \rnk\, \MDS_n(x_0,\dots,x_n)\\
    &
    \!\begin{multlined}[t][9cm]
      =\size X(x_0,x_1) \cdot \rnk\,\uMCS_{n-1}(x_1,\dots,x_n) - \size X(x_0,x_1) \cdot \rnk\, \MDS_{n-1}(x_1,\dots,x_n)
    \end{multlined}\\
    &= \size X(x_0,x_1) \cdot \big( \rnk\,\uMCS_{n-1}(x_1,\dots,x_n) - \rnk\, \MDS_{n-1}(x_1,\dots,x_n)\big)\\
    &= \size X(x_0,x_1) \cdot \rnk\, \MCS_{n-1}(x_1,\dots,x_n).
  \end{align*}
  If $x_0=x_1$, then
  \begin{align*}
    \MoveEqLeft \rnk\,\MCS_n(x_0,\dots,x_n)\\
    &= \rnk\,\uMCS_n(x_0,\dots,x_n) - \rnk\, \MDS_n(x_0,\dots,x_n)\\
    &\!
    \begin{multlined}[t][11cm]
      = \size X(x_0,x_1) \cdot \rnk\,\uMCS_{n-1}(x_1,\dots,x_n)\\
      {}- (\size X(x_0,x_1) - 1) \cdot \rnk\, \MDS_{n-1}(x_1,\dots,x_n)
      {}- \rnk\, \uMCS_{n-1}(x_1,\dots,x_n)
    \end{multlined}\\
    &= (\size X(x_0,x_1) - 1) \cdot \big( \rnk\,\uMCS_{n-1}(x_1,\dots,x_n) - \rnk\, \MDS_{n-1}(x_1,\dots,x_n)\big)\\
    &= (\size X(x_0,x_1) - 1) \cdot \rnk\, \MCS_{n-1}(x_1,\dots,x_n).
  \end{align*}
\end{proof}

It follows that if the following infinite sum converges absolutely, then it is equal to the Euler characteristic of magnitude homology:
\[ \chi(\MCS_*(X)) =  \sum_n (-1)^n \sum_{x_0,\dots,x_n} \prod_{0\le i \le n-1} \Big(\size X(x_i,x_{i+1}) - \delta_{x_i,x_{i+1}}\Big). \]
Of course, this is automatically the case if this sum is finite:

\begin{eg}
  If $\V=\FinSet$, then $\size X(x_i,x_{i+1}) - \delta_{x_i,x_{i+1}}$ is the number of \emph{nonidentity} morphisms from $x_i$ to $x_{i+1}$.
  Thus, in this case $\rnk\,\MCS_n(X)$ is the number of composable strings of $n$ nonidentity morphisms in $X$, or equivalently the number of nondegenerate $n$-simplices in the nerve of $X$.
  Thus, if this nerve has only finitely many simplices, then $\chi(\MCS_*(X))$ converges absolutely to the Euler characteristic of magnitude homology.
  Of course, the alternating sum of the number of nondegenerate $n$-simplices is also the simplicial Euler characteristic of the nerve of $X$, so we recover~\cite[Proposition 2.11]{leinster:ec-cat}.
\end{eg}

For metric spaces (even finite ones) the sum is generally infinite, but it nevertheless always converges absolutely:

\begin{thm}
  For any finite quasi-metric space $X$, the Euler characteristic $\chi(\MC_{*,*}(X))$ of the magnitude complex converges absolutely in $\Q\ser{q^\R}$.
  Hence by \cref{thm:chi-hom}, it equals the Euler characteristic $\chi(H_{*,*}(X))$ of the magnitude homology.
\end{thm}
\begin{proof}
  Since $X$ is finite and separated, there is an $\epsilon>0$ such that $d(x,y)\ge\epsilon$ for any $x\neq y$.
  Using \cref{thm:novser}\ref{item:ns2}, let $k\in \R$ be given, and choose $N\in\N$ such that $N \epsilon > k$; we show that if $n>N$ then the following sum contains no terms $q^\ell$ for $\ell\le k$:
  \[ \sum_{x_0,\dots,x_n} \prod_{0\le i \le n-1} \Big(\size X(x_i,x_{i+1}) - \delta_{x_i,x_{i+1}}\Big) \]
  Since $\size X(x_i,x_{i+1}) = q^{d(x_i,x_{i+1})}$, which equals 1 if and only if $x_i=x_{i+1}$, this sum is equivalently
  \[ \sum_{x_0\neq \dots\neq x_n} \prod_{0\le i \le n-1} q^{d(x_i,x_{i+1})} = \sum_{x_0\neq \dots\neq x_n} q^{d(x_0,x_1) + \cdots + d(x_{n-1},x_n)}. \]
  But each term in this sum has exponent
  \[ d(x_0,x_1) + \cdots + d(x_{n-1},x_n) > n\epsilon > N \epsilon > k \]
  which is what we wanted.
\end{proof}

\begin{eg}
  Unfortunately, the Euler characteristic of the ultramagnitude complex of an ultrametric space with respect to the ranks $\rnk_{\le k}$ (or $\rnk_{<k}$) from \cref{eg:rk-ultra} never converges except in trivial cases.
  As soon as $X$ contains two points $x\neq y$ with $d(x,y)\le k$, then $\MC_{n,k}(X)$ is nonzero for all $n$, as it contains a generator $\chain{x,y,x,y,\dots}$.
  Thus, $\chi(\MC_{*,*}(X))$ fails to converge (in the discrete topology on $\Z$).
  Even using a topological rank function similar to the Novikov rank would not help, since the generators $\chain{x,y,x,y,\dots}$ all lie in the same graded piece $\MC_{*,d(x,y)}(X)$.
  It is an open question whether any kind of ultramagnitude homology can be related to any kind of ultramagnitude.
\end{eg}

Returning to the case of general $\V$, it remains to identify $\chi(\MCS_*(X))$ with the definition of magnitude in \cref{sec:mag} using M\"{o}bius inversion and the matrix $Z_X$.
Since this is the main theorem of the paper, we repeat the standing \cref{assumption}.

\begin{thm}\label{thm:hmag}
  Let $\V$ be a semicartesian symmetric monoidal category, $\A$ a closed symmetric Quillen monoidal abelian category, $\Sigma :\V\to\A$ a strong symmetric monoidal functor, $\k$ an ordered topological ring, $\rnk : \ob(\A) \pto \k$ a positive multiplicative rank function such that $\Sigma$ takes finite values, $\size = \rnk \circ \Sigma$ the induced size function, and $X$ a $\V$-category with finitely many objects such that $\Sigma X$ is cofibrant.

  If the Euler characteristic $\chi(\MCS_*(X))$ is absolutely convergent, then $X$ has M\"{o}bius inversion, and
  \[ \mg[\size]{X}=\chi(\MCS_*(X)) = \chi(\hmag_*(X)).\]
\end{thm}
\begin{proof}
  The idea is to ``expand $Z_X^{-1}$ as a geometric series'' via the following non-rigorous calculation:
  \begin{equation*}
    \maybe{ Z_X^{-1} = \frac{1}{Z_X} = \frac{1}{1+(Z_X-1)}
      = \sum_{n=0}^\infty (-1)^n (Z_X-1)^n }
  \end{equation*}
  where by $1$ in $(Z_X-1)$ we mean the identity matrix.
  To make rigorous sense of this, we start by observing that by the definition of matrix multiplication, we have
  \[ (Z_X-1)^n(x_0,x_n) = \sum_{x_1,\dots,x_{n-1}} (Z_X-1)(x_0,x_1)\cdot \cdots\cdot (Z_X-1)(x_{n-1},x_n) \]
  Here $(Z_X-1)(x_0,x_1)$ means the $(x_0,x_1)$-entry of the matrix $Z_X-1$, and so on.
  On the other hand, we also have
  \[ (Z_X-1)(x,x') =
  \begin{cases}
    \size X(x,x') &\qquad x\neq x'\\
    \size X(x,x') - 1 &\qquad x= x'.
  \end{cases}
  \]
  Thus, by~\eqref{eq:rkmcs}, we have
  \[(Z_X-1)^n(x_0,x_n) = \sum_{x_1,\dots,x_{n-1}} \rnk\,\MCS_n(x_0,\dots,x_n).\]
  Since ranks are positive, this sum is between $0$ and $\rnk\, \MCS_n(X)$.
  Since $\chi(\MCS_*(X)) = \sum_n (-1)^n \rnk\, \MCS_n(X)$ converges absolutely, the series
  \[ \sum_{n=0}^\infty (-1)^n (Z_X-1)^n \]
  converges absolutely in the induced (entry-wise) topology of matrices over $\k$.

  Since multiplication by a single number distributes over convergent series, and convergent infinite sums can be interchanged with finite sums, we can also distribute \emph{matrix} multiplication over convergent series of matrices.
  Thus we have
  \allowdisplaybreaks
  \begin{align*}
    Z_X \cdot \sum_{n=0}^\infty (-1)^n (Z_X-1)^n
    &= (1 + (Z_X-1)) \cdot \sum_{n=0}^\infty (-1)^n (Z_X-1)^n\\
    &= \left(\sum_{n=0}^\infty (-1)^n (Z_X-1)^n\right) + (Z_X-1) \left(\sum_{n=0}^\infty (-1)^n (Z_X-1)^n\right)\\
    &= \left(\sum_{n=0}^\infty (-1)^n (Z_X-1)^n\right) + \left(\sum_{n=0}^\infty (-1)^n (Z_X-1)^{n+1}\right)\\
    &= \left(\sum_{n=0}^\infty (-1)^n (Z_X-1)^n\right) + \left(\sum_{n=1}^\infty (-1)^{n-1} (Z_X-1)^{n}\right)\\
    &= (-1)^0 (Z_X-1)^0 + \sum_{n=1}^\infty ((-1)^n + (-1)^{n-1})(Z_X-1)^n \\
    &= 1 + 0 = 1.
  \end{align*}
  A similar argument shows that $\sum_{n=0}^\infty (-1)^n (Z_X-1)^n \cdot Z_X = 1$.
  Thus, $Z_X$ is invertible, so $X$ has M\"{o}bius inversion.
  Moreover, we have a formula for its inverse:
  \[Z_X^{-1} = \sum_{n=0}^\infty (-1)^n (Z_X-1)^n.\]
  Thus, if $\sm$ denotes summing all the entries of a matrix, we have
  \begin{align*}
    \sm(Z_X^{-1})
    &= \sum_{n=0}^\infty (-1)^n \sm((Z_X-1)^n)\\
    &= \sum_{n=0}^\infty (-1)^n \sum_{x_0,x_n} (Z_X-1)^n(x_0,x_n)\\
    &= \sum_{n=0}^\infty (-1)^n \sum_{x_0,x_n} \sum_{x_1,\dots,x_{n-1}} \rnk\,\MCS_n(x_0,\dots,x_n)\\
    &= \sum_{n=0}^\infty (-1)^n\, \rnk\,\MCS_n(X)\\
    &= \chi(\MCS_*(X)).
  \end{align*}
  As noted above, by \cref{thm:chi-hom} this can also be identified with $\chi(\hmag_*(X))$.
\end{proof}

Recall that in \cref{thm:mag-met} we showed that any finite quasi-metric space has a magnitude valued in the ring $\Q(q^\R)$ of generalized rational functions, which can be embedded into $\Q\ser{q^\R}$.
Thus we obtain \cref{thm:magcat-met}:

\begin{cor}\label{thm:magcat-met-2}
  If $X$ is a finite quasi-metric space, then
  \begin{align*}
    \mg[\Q\ser{q^\R}]{X} &= \sum_{n=0}^\infty (-1)^n \Big( \sum_{\ell\in\R} \rnk\, H_{n,\ell}(X) \, q^\ell \Big)\\
    &= \sum_{\ell\in\R} \Big( \sum_{n=0}^\infty (-1)^n \,\rnk\, H_{n,\ell}(X)\Big) q^\ell,
  \end{align*}
  the former infinite sum converging in the topology of $\Q\ser{q^\R}$.\qed
\end{cor}

\cref{thm:hmag} shows that if the Euler characteristic of magnitude homology converges, then it is equal to the magnitude (which \textit{a fortiori} exists).
On the other hand, if we only know that the magnitude exists, it doesn't necessarily follow that the Euler characteristic of magnitude homology converges; but we can nevertheless recover the magnitude using one of the standard methods for summing a divergent series.

\begin{thm}\label{thm:hmag-divser}
  In the context of \cref{assumption}, suppose that $X$ has M\"{o}bius inversion relative to $\size = \chi\circ \Sigma$.
  Then the formal power series
  \[ \sum_{n=0}^\infty \rnk\,\MCS_n(X)\cdot t^n \quad \in \k\pser{t} \]
  is equal to a rational function (i.e.\ its image in $\k\ser{t}$ is also in the image of $\k(t)$), and evaluating this rational function at $t=-1$ yields $\mg[\k]{X}$.
\end{thm}
\begin{proof}
  Give $\k\pser{t}$ the usual topology in which the set of multiples of
  $t^n$ is a generating neighborhood of $0$, for each $n \in \N$. Every
  formal power series $\sum_{n=0}^\infty a_n t^n$ (where $a_n\in \k$)
  converges to itself in this topology. 
  Thus, similar arguments as in \cref{thm:hmag} (and~\cite[Lemma 2.1]{bl:ec-div}) show that 
  \[(1-(Z_X-1)t) \cdot \sum_{n=0}^\infty (Z_X-1)^n t^n = 1 \]
  and likewise on the opposite side.
  It follows that $(1-(Z_X-1)t)$ is invertible over $\k\pser{t}$, with inverse $\sum_{n=0}^\infty (Z_X-1)^n t^n$.
  In fact it is obvious that $(1-(Z_X-1)t)$ is invertible over $\k\ser{t}$,
  since its determinant $\det(1-(Z_X-1)t)$ is a polynomial whose constant
  term (its value at $t=0$) is $1$.
  Thus we also have the usual formula for its inverse in $\k\ser{t}$, which is therefore equal to the inverse we have just computed in $\k\pser{t}$:
  \[ \sum_{n=0}^\infty (Z_X-1)^n t^n = \frac{\adj(1-(Z_X-1)t)}{\det(1-(Z_X-1)t)}. \]
  Hence, summing the entries of both sides, we get
  \begin{align*}
    \sum_{n=0}^\infty  \rnk\,\MCS_n(X)\cdot t^n 
    &= \sm\left(\sum_{n=0}^\infty (Z_X-1)^n t^n\right)\\
    &= \frac{\sm(\adj(1-(Z_X-1)t))}{\det(1-(Z_X-1)t)}
  \end{align*}
  which evidently lies in $\k(t)$.
  Finally, since $Z_X$ is assumed invertible over $\k$, when we evaluate this rational function at $t=-1$ we get the sum of the entries of the usual formula for its inverse there, hence $\mg X$.
\end{proof}

\begin{rmk}
  We have seen in \cref{thm:hmag-adj-invar} that magnitude homology is invariant under equivalence (and even adjunction) of categories.
  However, the property of having M\"{o}bius inversion, and the (stronger, by \cref{thm:hmag}) property that $\chi(\MCS_*(X))$ converges absolutely, are not so invariant.

  Indeed, as we have noted before, a $\V$-category with M\"{o}bius inversion must be skeletal, since two isomorphic objects would produce two identical rows in $Z_X$.
  Similarly, the numbers $\rnk\,\MCS_n(x_0,\dots,x_n)$ are not invariant under equivalence: equivalent $\V$-categories $X\simeq X'$ have homotopy equivalent magnitude complexes, but the ranks of the individual chain groups will generally differ.
  Moreover, although one can sum the divergent series of \cref{thm:hmag-divser} in more generality to define a more general notion of ``series magnitude'' for enriched categories, as in~\cite{bl:ec-div}, the result is no longer invariant under equivalence of categories.
\end{rmk}

We can however extend the result of \cref{thm:hmag} slightly while remaining invariant under equivalence.

\begin{cor}
  Suppose $X$ and $X'$ are equivalent $\V$-categories that both satisfy \cref{assumption}, and that $\chi(\MCS_*(X'))$ is absolutely convergent.
  Then $X$ has magnitude in the sense of \cref{defn:mag2}, and $\mg{X} = \chi(H_*(X))$.
\end{cor}
\begin{proof}
  By \cref{thm:hmag}, $X'$ has M\"{o}bius inversion and $\mg{X'}=\chi(H_*(X'))$.
  But $H_*(X) \cong H_*(X')$, so they have the same Euler characteristic.
  And by \cref{thm:mag-eqv0,thm:mag-eqv}, $\mg X = \mg {X'}$.
\end{proof}

\begin{rmk}
It is natural to wonder about stronger sorts of invariance, for instance under Morita equivalence of $\V$-categories, or equivalently Cauchy completion (cocompletion under absolute colimits).
Indeed, magnitude homology is Morita-invariant if $\Sigma$ is cocontinuous; this can be deduced from its identification with a kind of Hochschild homology, or proven directly.

However, magnitude is not Morita-invariant: for instance, as noted in~\cite{leinster:ec-cat}, the free ordinary category on an idempotent has magnitude $\frac12$, whereas its Cauchy-completion has magnitude $1$.
Note that \cref{thm:hmag} does not apply to this example, since the nerve of an idempotent has infinitely many nondegenerate simplices, so $\chi(\MCS_*(X))$ does not converge.
On the other hand, \cref{thm:hmag} does apply both to a metric space and its Cauchy completion, but in this case the functor $\Sigma$ from \cref{eg:Rab} is not cocontinuous.
We do not know any examples of a cocontinuous $\Sigma$ for which \cref{thm:hmag} can apply to both sides of a nontrivial Morita equivalence, but if there were one it would follow that the corresponding numerical magnitude was also Morita-invariant.
\end{rmk}

\begin{rmk}\label{rmk:non-semicart}
  It is also natural to wonder whether the semicartesianness assumption on $\V$ can be relaxed.
  We know of one example suggesting that this may be possible.
  In~\cite{ckl:mag-fdalg} it is shown that if $\V$ is the category of finite-dimensional vector spaces over an algebraically closed field $K$, with $\size:\ob(\V)\to\Q$ the dimension, and $X$ is the $\V$-category of indecomposable projective modules over a $K$-algebra $A$ of finite dimension and finite global dimension, then $\mg X = \sum_n (-1)^n \,\mathrm{dim} \, \operatorname{Ext}^n_A(S,S)$, where $S$ is the direct sum of a system of representatives of isomorphism classes of simple $A$-modules.
  This clearly looks like the Euler characteristic of a (co)homology theory, and indeed because $K$ is a field we can identify $\operatorname{Ext}^n_A(S,S)$ with the Hochschild \emph{cohomology} $\HH^n(A;\hom_K(S,S))$, which is also a Hochschild cohomology of $X$ since $X$ is Morita equivalent to $A$.
  However, it is unclear to us how this could be unified with our
  \cref{thm:hmag}, or why the dualization to cohomology appears here. Note
  that there is also a theory of magnitude cohomology, introduced by
  Hepworth~\cite{hepworth:mag-cohom}.
\end{rmk}

\section{Open problems}
\label{sec:open-problems}

There are many open problems regarding magnitude homology of general enriched categories, including the following.
\begin{enumerate}
\item We have already mentioned in \cref{rmk:non-semicart} the question of whether \cref{thm:hmag} can be generalized to the non-semicartesian case.
\item What other enriching categories $\V$ support an interesting magnitude
  homology?
\item K\"{u}nneth and Mayer--Vietoris theorems for magnitude homology were
  proved for graphs in~\cite{hw:mag-hom-gr}, and extended to metric spaces
  in~\cite{bk:mag-hom-diag}.  Can they be generalized to arbitrary enriched
  categories?
\end{enumerate}
There are also many open problems regarding magnitude homology of metric spaces specifically, such as the following.
\begin{enumerate}[resume]
\item What can be said about the geometric meaning of $H_{n,\ell}(X)$ for
  $n>2$? Progress has been made since the appearance of the preprint
  version of this paper; see the works cited at the start of
  section~\ref{sec:geo-met}, as well as~\cite{ay:geom-app} for the case of
  graphs. But much remains to be understood.
\item Our theorem relating magnitude homology to magnitude applies only to \emph{finite} metric spaces.
  Magnitude homology is defined for arbitrary metric spaces; indeed this is one of the virtues of an algebraic invariant over a numerical one, and we have seen in \cref{sec:geo-met} that it detects interesting information when applied directly to infinite metric spaces.
  On the other hand, the \emph{magnitude} of an infinite metric space can also be defined as the limit of approximating finite subspaces, or directly by ``replacing sums with integrals'' in the definition of the magnitude of finite metric spaces~\cite{meckes:posdef}.
  Can this generalized notion of magnitude also be recovered from the magnitude homology?
\item A related observation is that when magnitude homology groups of an infinite metric space are nonzero, they tend to be infinitely (even uncountably) generated.
  However, their generators tend to be points or tuples of points of $X$, which suggests that they could be endowed with some ``topological'' structure to make them more manangeable (e.g.\ they could be generated by a set that is compact or has finite measure).
  Some such structure might be necessary to calculate a finite ``size'' in order to determine the magnitude of an infinite metric space from its magnitude homology.
\item Magnitude homology only ``notices'' whether the triangle inequality
  is a strict equality or not. Is there a ``blurred'' version that notices
  ``approximate equalities''? And relatedly, almost everyone who
  encounters both magnitude homology and persistent homology feels that
  there should be some relationship between them. What is it? These
  questions have been addressed by Otter~\cite{otter}, who first related
  the two theories using a notion of ``blurred magnitude homology'', by Govc and
  Hepworth~\cite{gh:pers-mag}, who showed that the magnitude of a finite
  metric space is the ``persistent magnitude'' of its blurred magnitude
  homology, and by Cho~\cite{cho}, who provided a common framework for
  persistent and magnitude homology. What more can be said about the
  relationship?
\end{enumerate}

\bibliographystyle{gtart}

\end{document}